
\documentclass[a4paper]{article}

\usepackage{graphicx}
\usepackage{bbm}
\usepackage{amsmath,amssymb,amsthm}
\usepackage{a4wide}

\newcommand{\Expectation}{{\ensuremath{\mathbf{E}}}}
\providecommand{\coreGent}{\SetH}



 \providecommand{\C}{{\ensuremath{\mathbf{C}}}}

 \providecommand{\SetH}{{\ensuremath{\mathbbm{H}}}}

 \renewcommand{\P}{{\ensuremath{\mathbf{P}}}}
 
 \providecommand{\P}{{\ensuremath{\mathbf{P}}}}
 
 \providecommand{\R}{{\ensuremath{\mathbbm{R}}}}

 \providecommand{\Z}{{\ensuremath{\mathbbm{Z}}}}
 \providecommand{\1}{{\ensuremath{\mathbbm{1}}}}

 \providecommand{\At}    {{\ensuremath{\tilde{A}}}}

 \providecommand{\Gt}    {{\ensuremath{\tilde{G}}}}

 \providecommand{\Lt}    {{\ensuremath{\tilde{L}}}}

 \providecommand{\St}    {{\ensuremath{\tilde{S}}}}

 \providecommand{\Yt}    {{\ensuremath{\tilde{Y}}}}

 \providecommand{\Lh}    {{\ensuremath{\hat{L}}}}

 \providecommand{\mt} {{\ensuremath{\tilde{m}}}}

 \providecommand{\pt} {{\ensuremath{\tilde{p}}}}

 \providecommand{\Cb}    {{\ensuremath{\bar{C}}}}

 \providecommand{\Lb}    {{\ensuremath{\bar{L}}}}

 \providecommand{\Sb}    {{\ensuremath{\bar{S}}}}

 \providecommand{\pb} {{\ensuremath{\bar{p}}}}

\providecommand{\mut}  {{\ensuremath{\tilde{\mu}}}}

\providecommand{\pit}  {{\ensuremath{\tilde{\pi}}}}
\providecommand{\pih}  {{\ensuremath{\hat{\pi}}}}

\providecommand{\pib}  {{\ensuremath{\bar{\pi}}}}

%
%
%

%
%
%

%
%
%

%
%
%
%
%
%

%
%
%

%
%
%

%
%
%

%
%
%
%
%
%


\providecommand{\qed}{\hfill\mbox{$\Box $}}

\providecommand{\ro}[1]    {{{(#1)}}}
\providecommand{\rob}[1]   {{{\bigl(#1\bigr)}}}
\providecommand{\robb}[1]  {{{\biggl(#1\biggr)}}}
\providecommand{\roB}[1]   {{{\Bigl(#1\Bigr)}}}

\providecommand{\rub}[1]   {{{\bigl(#1\bigr)}}}

\providecommand{\ruB}[1]   {{{\Bigl(#1\Bigr)}}}

\providecommand{\eckb}[1]  {{{\bigl[#1\bigr]}}}
\providecommand{\eckbb}[1] {{{\biggl[#1\biggr]}}}
\providecommand{\eckB}[1]  {{{\Bigl[#1\Bigr]}}}

\providecommand{\curlb}[1]  {{{\bigl\{#1\bigr\}}}}

\providecommand{\abs}[1]  {{\ensuremath{|#1|}}}

\providecommand{\al}      {{\ensuremath{\alpha}}}

\providecommand{\ld}      {{\ensuremath{\lambda}}}

\providecommand{\eps}     {{\ensuremath{\varepsilon}}}
\providecommand{\dl}      {{\ensuremath{\delta}}}

\providecommand{\limepsO}{{\ensuremath{{\displaystyle \lim_{\eps \ra 0}}}}}

\providecommand{\limtO}  {{\ensuremath{{\displaystyle \lim_{t \ra 0}}}}}




\providecommand{\qqastO} {\ensuremath{\qquad\text{as }t\to0}}

\providecommand{\ra}{\rightarrow}

\providecommand{\lradlO}  {\xrightarrow{\dl  \ra0}}

\providecommand{\lratO}{\xrightarrow{t  \ra0}}

\providecommand{\wlimeps}%
      {{\ensuremath{\stackrel{\eps \rightarrow \infty}%
                            {\Longrightarrow}}}}

\providecommand{\mal}{\ensuremath{{\displaystyle \cdot}}}

\providecommand{\fa}{\ensuremath{\;\;\forall\;}}

\providecommand{\Dom}{{\ensuremath{\mathfrak{D}}}}

\providecommand{\Law}[2][]{{\ensuremath{\mathcal L^{#1}\left(#2\right)}}}

\providecommand{\uline}[1]{{\ensuremath{\underline{#1}}}}

\providecommand{\eqd}{{\ensuremath{\;\overset{d}{=}\;}}}


\newcommand{\cadlag}{c\`adl\`ag}
\newcommand{\Nat}{\ensuremath{\mathbbm{N}}}

\newtheorem{lem}    {Lemma}[section]
\newtheorem{rem}    {Remark}[section]
\newtheorem{thm}    {Theorem}[section]
\newtheorem{assumpt}{Assumption}[section]
\newcommand{\acks}{\section*{Acknowledgements}}
%
%
%
\begin{document}

\author{Martin Hutzenthaler\footnote{Research supported by the DFG in the Dutch German Bilateral Research Group "Ma\-the\-ma\-tics of Random Spatial Models from Physics and Biology" (FOR 498)} \footnote{Research supported by EPSRC Grant no GR/T19537/01}
\\
{\it Goethe-University Frankfurt}
\and
Jesse E. Taylor\\
{\it University of Oxford}
}

\title{\bf Time Reversal of Some Stationary Jump-Diffusion Processes\\ from Population Genetics} 
\date{}
\maketitle
\def\thefootnote{\fnsymbol{footnote}}
\def\@makefnmark{\hbox to\z@{$\m@th^{\@thefnmark}$\hss}}
\footnotesize\rm\noindent
\footnote[0]{{\it AMS\/\ 2010 subject
classifications: {\rm Primary 60J60; secondary 60J55, 92D10}}. }
\footnote[0]{{\it Keywords and phrases:}
Time Reversal, Jump-Diffusions, Local Time, Coalescents, Population Bottlenecks,
Selective Sweeps
}
\normalsize\rm

\begin{abstract}
We describe the processes obtained by time reversal of a class of stationary 
jump-diffusion processes that model the dynamics of genetic variation in populations 
subject to repeated bottlenecks.  Assuming that only one lineage survives each 
bottleneck, the forward process is a diffusion on $[0,1]$ that jumps to the boundary 
before diffusing back into the interior.  We show that the behavior of the time-reversed 
process depends on whether the boundaries are accessible to the diffusive motion 
of the forward process.  If a boundary point is inaccessible to the forward diffusion, 
then time reversal leads to a jump-diffusion that jumps immediately into the interior 
whenever it arrives at that point.  If, instead, a boundary point is accessible, then the 
jumps off of that point are governed by a weighted local time of the time-reversed 
process.
\end{abstract}

%
%
\section{Introduction} 

Kingman's observation that the genealogy of a random sample of individuals from a 
panmictic, neutrally-evolving population can be represented as a Markov process
\cite{Ki82a,Ki82b} ranks as one of the most influential contributions of mathematical 
population genetics.  Not only has the coalescent led to a deeper understanding of 
evolution in neutral populations, but it also plays a central role in statistical genetics 
where it facilitates the efficient simulation of sample genealogies.  Unfortunately, 
the Markov property that makes Kingman's coalescent both mathematically and 
computationally tractable is usually not shared by genealogical processes in 
populations composed of non-exchangeable individuals.  In particular, this is true 
when there are fitness differences between individuals, since then the selective 
interactions between individuals cause genealogies to depend on the history of lineages 
that are non-ancestral to the sample.  The key to overcoming this difficulty is to extend 
the genealogy to a higher-dimensional process that does satisfy the Markov property.  
This has been done in two ways.  One approach is to embed the genealogical tree 
within a graphical process called the ancestral selection graph \cite{KN97,NK97,DK99AAP}
in which lineages can both branch and coalesce.   The intuition behind this construction 
is that the effects of selection on the genealogy can be accounted for by keeping track 
of a pool of potential ancestors which includes lineages that have failed to persist due 
to being out-competed by individuals of higher fitness.

An alternative approach was proposed by Kaplan et al.\ (1988) \cite{kdh88}, who showed 
that the genealogical history of a sample of genes under selection can be represented
as a structured coalescent process.  Here we think of the population as being divided 
into several panmictic subpopulations (called genetic backgrounds) which consist of individuals 
that share the same genotype at the selected locus.  Because individuals with the same 
genotype are exchangeable (i.e., they have the same fitness), the rate of coalescence 
within a background depends only on the size of the background and the number of 
ancestral lineages sharing that genotype.  Thus, to obtain a Markov process, we need 
to keep track of two kinds of information: (i) the types of the ancestral lineages, and (ii) 
the frequencies of the alleles segregating at the selected locus, followed backwards in 
time.  For many applications it is assumed that the population is at equilibrium and that 
the forwards in time dynamics of the allele frequencies are described by a stationary 
diffusion process.  In this case, the ancestral process of allele frequencies can be 
identified by time reversal of the diffusion process.  In particular, if the diffusion process 
is one-dimensional, then the time-reversed process conveniently has the same law 
as the forward process.  A formal derivation of the structured coalescent process for 
such an equilibrium population is given in \cite{BES04} and various applications 
are discussed in \cite{BE04,CG04,Tay07}.

The focus of this article is on the time reversal of a population genetical model that 
incorporates mutation, selection, genetic drift and population bottlenecks.  To be concrete, 
consider a locus with two alleles, $A_{0}$ and $A_{1}$, and let $p^{N}(t)$ denote the 
frequency of $A_{1}$ at time $t$ in a population of size $N$.  In the absence of bottlenecks, 
we will suppose that the jump process $p^{N}(\cdot)$ can be approximated by the
Wright-Fisher diffusion $p(\cdot)$ with generator
\begin{equation}  \begin{split}  \label{eq:A}
	A \phi(p) & =  \frac{1}{2} p(1-p) \phi''(p) + (\mu_{0} (1-p) - \mu_{1} p + s(p) p(1-p)) 
		\phi'(p) \\
		& \equiv \frac{1}{2} v(p) \phi''(p) + \mu(p) \phi'(p),
\end{split}     \end{equation}
where $\mu_{0}$ and $\mu_{1}$ are the scaled mutation rates from $A_{0}$ to $A_{1}$
and from $A_{1}$ to $A_{0}$, respectively, and $s(p)$ is the scaled and possibly 
frequency-dependent selection coefficient of $A_{1}$ relative to $A_{0}$.  In using the 
diffusion approximation, we assume that $N$ is large, that time is measured in units of 
$N$ generations, and the unscaled mutation rates and selection coefficient are of order 
$N^{-1}$.  Convergence results justifying the passage to the diffusion limit can be 
found in \cite{EK86}.

Population bottlenecks are transient events during which most of the population is descended 
from a small number of individuals.  On the diffusive time scale, these can be modeled 
as instantaneous jumps in the allele frequencies, and in this article we will be concerned 
with a class of models in which the bottlenecks always result in the temporary fixation of one 
of the two alleles, i.e., $p(\cdot)$ always jumps to $0$ or $1$.  We have two scenarios in mind.  
In the first, we consider a locus that is part of a non-recombining segment of DNA (e.g., 
a mammalian mitochondrial genome) subject to strong selective sweeps which occur at 
rate $\lambda$.  During each sweep, a unique copy of a favorable mutation arises at 
some linked site and rises rapidly to fixation.  Depending on whether the new, strongly-selected 
mutation occurs on a chromosome carrying an $A_{1}$ or $A_{2}$ allele, the frequency 
of $A_{1}$ will either increase from $p$ to $1$ with probability $p$ or decrease from
$p$ to $0$ with probability $1-p$.  Here we imagine that the selective advantage of 
the favored mutation is so strong that this change can be treated as a jump.  The 
pseudohitchhiking model introduced by Gillespie \cite{Gil00} belongs to this class, as 
does a related, more general model studied by Kim \cite{Kim04}.

The second scenario concerns demographic bottlenecks that occur during transmission 
of parasites from infected to uninfected hosts.  Here we will let $p$ denote the frequency 
of $A_{1}$ in a chronological series of infected hosts linked by a transmission chain, and 
we will assume that $p(\cdot)$ can be modeled by a diffusion process from the time when 
one of these hosts is first infected to the time when that host first transmits the infection to 
the next host in the transmission chain.  Suppose that transmissions occur at rate $\lambda$, 
and that each new infection is founded by a single parasite, as has been proposed for HIV-1 
\cite{Yuste99} and for some bacterial pathogens \cite{Rubin87}.  In this case, $p$ will jump to 
$0$ or $1$ following each transmission depending on the type of the transmitted parasite.  
Also, to allow for the possibility that transmission itself might be selective (e.g., \cite{Rong07}), 
we will let $w(p)$ denote the probability that the transmitted parasite is of type $A_{1}$ given 
that the frequency of this allele in the transmitting host is $p$.  In general, we stipulate that 
$w(0) = 0$, $w(1) = 1$, and that $w(p)$ is monotonically increasing.  If transmission is
unbiased, then $w(p) = p$, as in the pseudohitchhiking model.  A particular case of this 
transmission chain model was studied by Rouzine and Coffin \cite{RoCo99} to understand 
the effects of selection and transmission bottlenecks on antigenic variation in HIV-1.

Both of these scenarios can be modeled by a jump-diffusion process with infinitesimal
generator 
\begin{equation}  \begin{split}  \label{eq:G}
	G \phi(p)  = & \frac{1}{2} p(1-p) \phi''(p) + (\mu_{0} (1-p) - \mu_{1} p + s(p)p(1-p)) \phi'(p) + \\  
		 & \lambda w(p) \big( \phi(1) - \phi(p) \big) + \lambda (1 - w(p)) \big( \phi(0) 
		- \phi(p) \big),
\end{split}     \end{equation}
where for technical reasons we will assume that $s(p)$ and $w(p)$ are smooth functions 
on $[0,1]$, and that both mutation rates, $\mu_{0}$ and $\mu_{1}$, are positive.  Under 
these conditions, it can be shown (cf.\ Lemma 3.1) that the process $p(\cdot)$ has a unique 
stationary distribution, $\pi(p) dp$, which has a density on $[0,1]$.  To characterize the 
structured coalescent process corresponding to this model, we need to identify the stationary 
time reversal of the process $p(\cdot)$.  Formally, this can be done by solving the following 
adjoint problem for the operator $\tilde{G}$:
\begin{equation}  \label{eq:adjoint problem}
	\int_{0}^{1} \psi(p) G \phi(p) \pi(p) dp = \int_{0}^{1} \phi(p) \tilde{G} \psi(p) \pi(p) dp,
\end{equation}
where $\phi$ is in the domain of $G$.  If $\tilde{G}$ generates a Markov process 
$\tilde{p}(\cdot)$, then this process will have the same law as the stationary time reversal 
of $p(\cdot)$ \cite{Nel58}.  When $\lambda = 0$, $p(\cdot)$ is a diffusion process and 
a simple calculation using integration-by-parts shows that $\tilde{G} = G$, demonstrating 
that the law of the diffusion is invariant under time-reversal, as remarked above.  However, 
if $\lambda > 0$, then for the adjoint condition~\eqref{eq:adjoint problem} to be satisfied 
for all $\phi \in \mathcal{C}^{2} (\mathbb{R}) \cap \mathcal{C}[0,1]$, we must instead set
\begin{equation}  \label{eq:G_tilde}
	\tilde{G} \psi(p) = \frac{1}{2}p(1-p) \psi''(p) + \mut(p) \psi'(p),
\end{equation}
where
\begin{equation}
	\mut(p) =  \frac{1}{\pi(p)} \big( p(1-p) \pi'(p)  +  (1-2p - \mu(p)) \pi(p) \big)
\end{equation}
and $\psi \in \mathcal{C}^{2}(\mathbb{R}) \cap \mathcal{C}[0,1]$ satisfies
\begin{equation*}
		 \psi(1) = \int_{0}^{1} \psi(p)  \left( \frac{w(p) \pi(p)}{\kappa} \right) dp  
		 	\mbox{\;\;\; and \;\;\;}
		 \psi(0) = \int_{0}^{1} \psi(p) \left( \frac{(1-w(p)) \pi(p)}{1 - \kappa} \right) dp
\end{equation*}
with $\kappa = \int_{0}^{1} w(p) \pi(p) dp$.  Although it is not immediately clear that
the operator defined by~\eqref{eq:G_tilde} is the generator of a Markov process, this 
calculation does show that the process incorporating bottlenecks is not invariant under 
time reversal.

To gain some insight into the qualitative behavior of the time-reversed process, it is 
useful to consider two heuristic descriptions.  We begin by observing that the behavior 
of $\tilde{p}(\cdot)$ depends strongly on whether the boundary points $\{0, 1\}$ are 
accessible or inaccessible to the diffusive motion of the forward process.  Recall that 
for the Wright-Fisher diffusion corresponding to $A$ (which we call the diffusive motion 
of the jump-diffusion process), Feller's boundary classification conditions show that 
$0$ (resp.\ $1$) is accessible if and only if $u_{0} < 1/2$ (resp.\ $u_{1} < 1/2$),
see e.g.\ Section 4.7 in \cite{Ewe04}.  The importance of this distinction is illustrated 
in Figure 1, which shows sample paths of the jump-diffusion process corresponding 
to cases where the two boundaries are either inaccessible (A) or accessible (B) to the
forward diffusion.  To see what this suggests about the behavior of the time-reversed 
process, begin at the top of each figure and follow the sample path backwards in time 
towards the bottom.  If the boundaries are inaccessible, then whenever the sample path 
is followed back to a boundary at some time, the forward process will necessarily have 
reached that boundary via a jump.  Consequently, the time-reversed process must 
immediately jump into the interval $(0,1)$ whenever it arrives at a boundary that is 
inaccessible to the forward diffusion.  The behavior of the time-reversed process at a 
boundary that is accessible to the forward diffusion is very different.  In this case, when 
the sample path of the time-reversed jump diffusion hits that boundary, the forward 
process may have arrived there either diffusively or via a jump from the interior (Figure 
1B).  Accordingly, the time-reversed process need not immediately jump into the interior 
$(0,1)$ when it visits the boundary, although jumps can only occur when the process 
is on the boundary and are certain to occur at some such times if $\lambda > 0$.

\begin{figure}[hp]
	\begin{center}
	\includegraphics[scale=0.55]{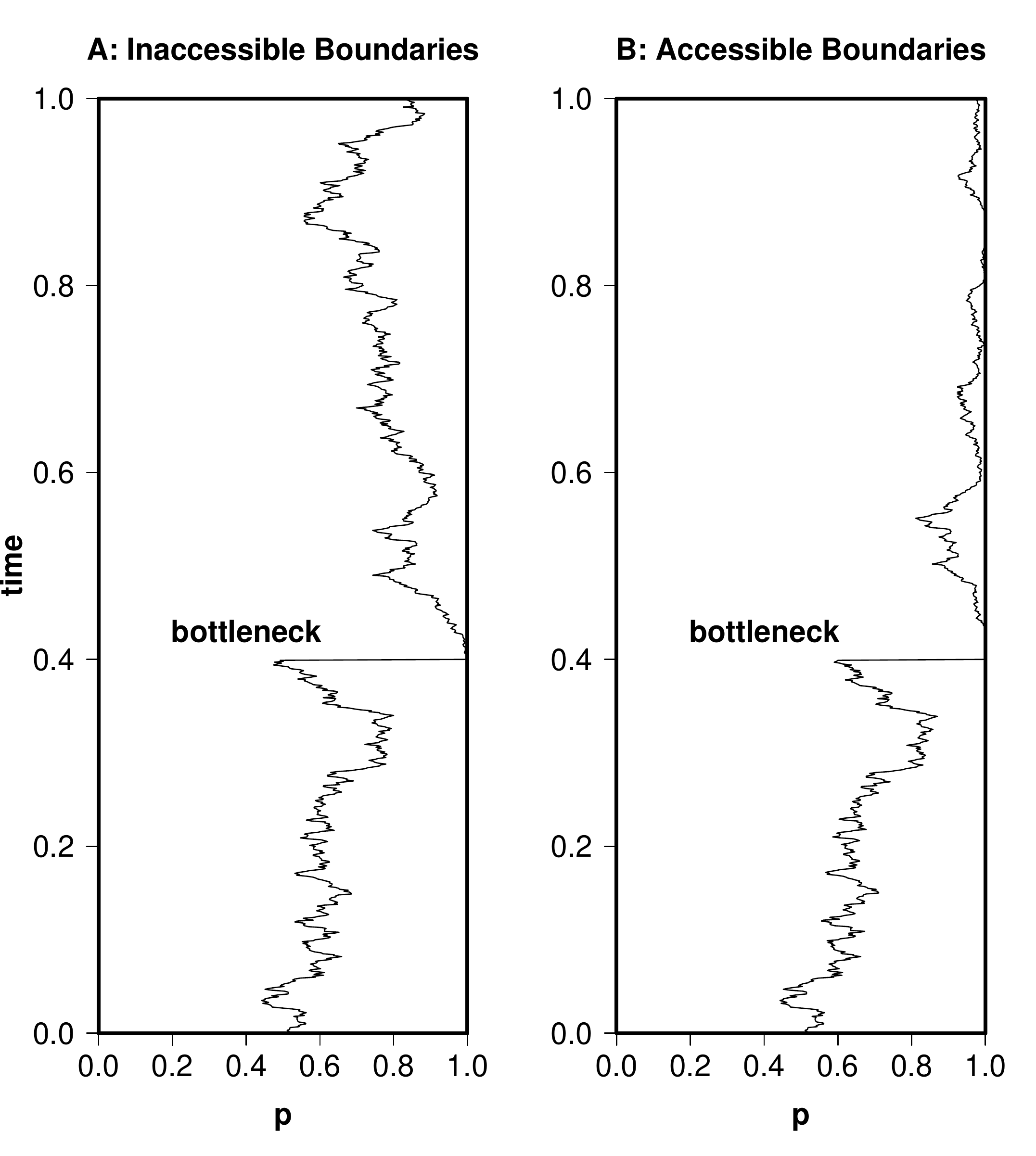}
	\caption{Sample paths of the jump-diffusion process~\eqref{eq:G} with either inaccessible 
	(A) or accessible boundaries (B). The forward diffusion is a neutral Wright-Fisher
	process with symmetric mutation: $\mu_{0} = \mu_{1} = 1$ in A and $0.2$ in B.} 
	\label{fig1}
	\end{center}
\end{figure}

A more quantitative picture of this second case can be obtained by considering a less 
singular process that approximates the jump-diffusion process corresponding to~\eqref{eq:G}.  
For $\epsilon \in (0, 1/2)$, let $p_{\epsilon}(\cdot) = \rob{p_{\epsilon}(t): t \geq 0}$ be
a perturbation of a Wright-Fisher diffusion which at rate $\lambda$ jumps to a point 
chosen uniformly at random from an interval of width $\epsilon$ adjacent to one of 
the two boundaries. More precisely, let $p_{\epsilon}(\cdot)$ be the Markov process 
with generator 
\begin{eqnarray*}
	G_{\epsilon} \phi(p) & = & \frac{1}{2} p(1-p) \phi''(p) + \big(\mu_{0}(1-p) - \mu_{1}p + 
		s(p) p(1-p) \big) \phi'(p) + \nonumber \\
		&  &  \lambda \left( w(p) \frac{1}{\epsilon} \int_{1-\epsilon}^{1} (\phi(q) - \phi(p)) dq  
		+  (1-w(p)) \frac{1}{\epsilon} \int_{0}^{\epsilon} (\phi(q) - \phi(p)) dq \right).
\end{eqnarray*}
Writing $\pi_{\epsilon}(p)$ for the density of the stationary distribution of this process, a 
simple calculation using~\eqref{eq:adjoint problem}
shows that the stationary time reversal of $p_{\epsilon}(\cdot)$, 
denoted $\tilde{p}_{\epsilon}(\cdot)$, is also a jump diffusion process with generator
\begin{eqnarray*}
	\tilde{G}_{\epsilon}\psi(p) & = & \frac{1}{2} p(1-p) \psi''(p)  + 
		\frac{1}{\pi_{\epsilon}(p)}	\big( p(1-p) \pi_{\epsilon}'(p) + 
		(1-2p-\mu(p)) \pi_{\epsilon}(p)\big) \psi'(p)  +   \nonumber \\
		&  &   \lambda \kappa_{\epsilon} \left( \frac{1}{\epsilon \pi_{\epsilon}(p)} 
		1_{(1-\epsilon,1]}(p) \right)  
		\int_{0}^{1} \left( \frac{w(q) \pi_{\epsilon}(q)}{\kappa_{\epsilon}} \right) 
		(\psi(q) - \psi(p)) dq  +  \nonumber \\
		& &   \lambda (1-\kappa_{\epsilon})  \left( \frac{1}{\epsilon \pi_{\epsilon}(p)} 
		1_{[0,\epsilon)}(p) \right)
		\int_{0}^{1}  \left( \frac{(1-w(q)) \pi_{\epsilon}(q)} {1-\kappa_{\epsilon}} \right) 
		(\psi(q) - \psi(p)) dq,
\end{eqnarray*}
where $\psi \in \mathcal{C}^{2}([0,1])$ and $\kappa_{\epsilon} = \int_{0}^{1} w(p) 
\pi_{\epsilon}(p)dp$.  It is easy to read off the behavior of this process from its generator.  
In particular, we see that $\tilde{p}_{\epsilon}(\cdot)$ can only jump when it is present in
the region $[0,\epsilon) \cup (1-\epsilon,1]$ and that the rate at which jumps occur out 
of this region is equal to $\lambda \kappa_{\epsilon}/(\epsilon \pi_{\epsilon}(p))$ when 
$p \in (1-\epsilon,1]$ and $\lambda (1-\kappa_{\epsilon})/(\epsilon \pi_{\epsilon}(p))$ 
when $p \in [0,\epsilon)$. 

To relate these observations to the process $\tilde{p}(\cdot)$, let $T > 0$ and notice that 
as $\epsilon$ tends to $0$, the sequence of processes $(p_{\epsilon}(\cdot))$ converges 
in distribution on $D_{[0,1]}([0,T])$ to $p(\cdot)$.  Furthermore, because time reversal is 
a continuous mapping on $D_{[0,1]}([0,T])$, the continuous mapping theorem \cite{EK86}
implies that the sequence of processes $(\tilde{p}_{\epsilon}(\cdot))$ converges in 
distribution to the process $\tilde{p}(\cdot)$.  In particular, this suggests that $\tilde{p}(\cdot)$ 
has the following behavior.  For each $\epsilon \in (0,1/2)$, define the additive functionals
\begin{eqnarray*}
	L_{1,\epsilon}(t) & \equiv &  \frac{1}{\epsilon} 
		\int_{0}^{t} \left(  \frac{1}{\pi(\tilde{p}(s))} \right) 
		1_{(1-\epsilon,1]}(\tilde{p}(s)) ds \nonumber \\
	L_{0,\epsilon}(t) & \equiv &  \frac{1}{\epsilon} 
		\int_{0}^{t} \left(  \frac{1}{\pi(\tilde{p}(s))} \right) 
		1_{[0,\epsilon)}(\tilde{p}(s)) ds,
\end{eqnarray*}
and suppose that for $i = 0, 1$, the limits $L_{i}(t) = \lim_{\epsilon \rightarrow 0} 
L_{i,\epsilon}(t)$ exist for all $t \geq 0$.  Here we would like to interpret $L_{i}(t)$ 
as the local time of the process $\tilde{p}(\cdot)$ at $i \in \{0,1\}$.  Then, by comparison 
with the jump-diffusion processes $\tilde{p}_{\epsilon}(\cdot)$, we expect that if both 
boundaries are accessible, then $\tilde{p}(\cdot)$ is a jump-diffusion with diffusive motion 
in $(0,1)$ governed by~\eqref{eq:G_tilde} which jumps from the boundary point $0$ 
to a random point in the interval $(0,1)$ distributed as $\tfrac{1}{\kappa}w(q)\pi(q)\,dq$ 
as soon as $L_0(\cdot)$ exceeds an exponential random variable with parameter 
$\ld\kappa$ and which jumps from the boundary point $1$ to a random point distributed 
as $\tfrac{1}{1-\kappa}\rob{1-w(q)}\pi(q)\,dq$ on $(0,1)$ as soon as $L_1(\cdot)$ 
exceeds an exponential random variable with parameter $\ld(1-\kappa)$.  Although 
these remarks are purely heuristic, we show below that they correctly describe the 
stationary time reversal of the pseudo-hitchhiking model and other jump-diffusions
with generators of the form (2).

%
%
\section{Main result}\label{sec:main_result}

Although our principle concern is with the modified Wright-Fisher process corresponding 
to~\eqref{eq:G}, we state our results for
a more general class of jump-diffusion processes, which 
we now introduce.  Let the forward process $\rob{p(t)\colon t\geq0}$ be the jump-diffusion 
process on $[0,1]$ corresponding to the generator
\begin{equation}  \begin{split}  \label{eq:G_general}
	G \phi(p)  = \tfrac12 v(p) \phi''(p) + \mu(p) \phi'(p)
   + \lambda w_0(p) \big( \phi(0) - \phi(p) \big) 
   + \lambda w_1(p) \big( \phi(1) - \phi(p) \big),
\end{split}     \end{equation}
for $\phi\in\C^2\rob{[0,1]}$.  In other words, the diffusive motion of $p(\cdot)$ is governed 
by the generator
\begin{equation}    \label{eq:A_general}
	A \phi(p) = \tfrac12 v(p) \phi''(p) + \mu(p) \phi'(p),\quad \phi\in\C^2\rob{[0,1]},
\end{equation}
with infinitesimal drift and variance coefficients, $\mu(\cdot)$ and $v(\cdot)$,
respectively, while
jumps occur at constant rate $\ld\geq0$ and move the process from state $p\in[0,1]$
either to $0$ with probability 
$w_0(p)\in[0,1]$ or to $1$ with probability $w_1(p):=1-w_0(p)$.
Throughout this article,
we will assume that the following conditions are satisfied.
\begin{assumpt}   \label{assumptions}
  The infinitesimal mean and variance satisfy $\mu(0)>0>\mu(1)$ 
  and $v(0)=v(1)=0<v(p)$ for all $p\in(0,1)$, respectively.
  Furthermore, $v(\cdot)$, $\mu(\cdot)$ and $w_0(\cdot)$ are analytic functions in a 
  neighborhood of $[0,1]$, and the infinitesimal variance has non-zero derivatives 
  $v^{'}(0)>0>v^{'}(1)$ at the boundaries.
\end{assumpt}
For example,
if $A$ is the generator of a neutral Wright-Fisher diffusion \eqref{eq:A} (with $s(p) 
\equiv 0$), then Assumption \ref{assumptions} is satisfied with $\mu(0) = \mu_0 > 0$, 
$\mu(1) = -\mu_1 < 0$, and $v^{'}(0) = 1 = -v^{'}(1)$.
We also remark that when Assumption~\ref{assumptions} is satisfied, Lemma 
\ref{l:equilibrium_distribution} shows that $\rob{p(t)\colon t\geq0}$ has a unique 
stationary distribution $\pi(p)dp$ with a density $\pi(\cdot)$ that satisfies a second 
order ordinary differential equation with non-local boundary conditions.

In Theorem 2.1, we characterize the time-reversed process $\rob{\pt(t)\colon t\geq0}$
of the forward process $\rob{p(t)\colon t\geq0}$.  In keeping with the heuristic
description given in the Introduction, $\rob{\pt(t)\colon t\geq0}$ is also a jump-diffusion 
process on $[0,1]$ but now with jumps from the boundary $\{0,1\}$ to the interior 
$(0,1)$.  The diffusive motion of this process is governed by the generator
\begin{equation}    \label{eq:A_tilde_general}
	\At \psi(p) = \tfrac{1}{2}v(p) \psi''(p) + \mut(p) \psi'(p)\qquad \text{where}\ \
  \mut(p):=-\mu(p)+\frac{(v\pi)^{'}(p)}{\pi(p)}
\end{equation}
and $\psi\in\C^2\rob{[0,1]}$.  Notice that this diffusion has the same infinitesimal 
variance as the forward diffusion, but has a different infinitesimal drift that depends
on the jump events via the stationary density $\pi(\cdot)$.  Also, the jump rates of the 
time-reversed process depend on a local time process which is described in the 
following way.  Recall that the scale function and the speed measure associated 
with $\At$ are
\begin{equation}  \label{eq:def:st_mt}
  \St(p):=\int_{\frac{1}{2}}^p
              \exp\roB{ -\int_{\frac12}^x\frac{2\mut(z)}{v(z)}\,dz }
           dx
  \ \text{ and }\ \mt(dp):=\frac{1}{v(p)\St^{'}(p)}dp,\quad p\in[0,1],
\end  {equation}
respectively.
The scale function will be identified with the associated measure $\St(dp):=\St^{'}(p)dp$ 
on $[0,1]$ and the speed measure $\mt(dp)$ will be identified with its density function.

We define the local time process of the jump-diffusion $\pt(\cdot)$ such that it agrees 
with the local time process of the diffusive motion until the first jump.  More formally,
we will introduce a non-negative process $\rob{\Lt_p(t)\colon t\geq0,\,p\in[0,1]}$ 
which is almost surely continuous in $(t,p)$ and which satisfies
\begin{equation}  \label{eq:occupation_time_formula_Lt}
  \int_0^t f\rob{\pt(u)}\,du=\int_0^1 f(p)\Lt_p(t)\,\mt(dp)\qquad\text{a.s.}\ t\geq0
\end  {equation}
for all measurable $f\colon[0,1]\to[0,\infty)$. 
We remark that  the local time process satisfying~\eqref{eq:occupation_time_formula_Lt}
differs from the semi-martingale local time of the diffusive motion
of the time-reversed process by a 
scalar factor (see Eq.~\eqref{eq:Lt_Lb}), i.e., $\Lt$ is a weighted semi-martingale local time.
That this process is well-defined is shown below 
in Lemma~\ref{l:local_time_process}.  The last ingredient needed in our construction 
is a pair of independent, exponentially-distributed random variables, $R_0$ and $R_1$, 
with parameters
\begin{equation}  \label{eq:rate}
  r_i:=\lim_{p\to i}\roB{\frac{\mt(p)}{\pi(p)}\lambda \kappa_i}\in[0,\infty]
\end{equation}
where $\kappa_i:=\int_0^1 w_i(p)\pi(p)dp$, $i\in\{0,1\}$.  The existence of the limit 
displayed in~\eqref{eq:rate} is guaranteed by Lemma~\ref{l:rate}.  By convention, 
$R_i:=0$ if $r_i=\infty$ and $R_i:=\infty$ if $r_i=0$.

With these definitions, we now describe the dynamics of the time-reversed process 
$\rob{\pt(t)\colon t\geq0}$.  Between jump times, $\rob{\pt(t)\colon t\geq0}$ evolves 
according to the law of the diffusion governed by $\At$.  If this diffusion hits a boundary $i\in\{0,1\}$ 
at a time $t\geq0$ and if at that time the local time process exceeds the random variable 
$R_i$, that is, if $\Lt_i(t)\geq R_i$, then $\pt(\cdot)$ jumps from $i$ to a random point
chosen from $(0,1)$ according to the distribution $\tfrac{1}{\kappa_i}\int w_i(p)\pi(p)dp$. 
From this point, $\pt(\cdot)$ restarts independently of the sample path up to that time.

To better understand how the dynamics of $\pt(\cdot)$ are influenced by the boundary 
behavior of the forward process, we take a closer look at the jump times.  Because 
the coefficients $v(\cdot)$ and $\mu(\cdot)$ are smooth on an interval containing $[0,1]$, an 
application of Feller's boundary classification criteria shows that a boundary point 
$i\in\{0,1\}$ is accessible to the forward diffusive motion if and only if $2\abs{\mu(i)}
<\abs{v^{'}(i)}$.  Then, in conjunction with Lemma~\ref{l:boundary_behavior_pi}, 
which describes the asymptotics of the density $\pi(p)$ near the boundaries, 
Lemma~\ref{l:rate} implies that
\begin{equation}   \label{eq:value_of_rate}
    r_i:=\lim_{p\to i}\roB{\frac{\mt(p)}{\pi(p)}\lambda \kappa_i}
    \left\{   \begin{array}{ll}
  		  \in(0,\infty)
                   & \mbox{ \; if $2\abs{\mu(i)} < \abs{v^{'}(i)}$
                                 and $\lambda w_i(\cdot)\not\equiv0$ }  \\
        =\infty    & \mbox{ \; if $2\abs{\mu(i)} \geq \abs{v^{'}(i)}$
                              and $\lambda w_i(\cdot)\not\equiv0$}\\
        =0         & \mbox{ \; if $\lambda w_i(\cdot)\equiv0$}
	\end{array}  \right.
\end{equation}
for $i\in\{0,1\}$.
Thus, provided that $\lambda w_i(\cdot)\not\equiv0$, the time-reversed process immediately 
jumps into the interior $(0,1)$ if the boundary point  is inaccessible to the forward diffusive 
motion, that is, if $2\abs{\mu(i)}\geq\abs{v^{'}(i)}$.  In this case, the state space of $\pt(\cdot)$ 
is in fact $[0,1]\setminus\{i\}$.  In contrast, if $i$ is accessible to the forward diffusion and 
$\lambda w_i(\cdot) > 0$, then the exponential random variable $R_{i}$ is almost surely 
positive and so a positive amount of local time will have to be accrued at $i$ before a
jump occurs off of this boundary point.

Notice that, in either case, we expect that both boundary points are accessible to the
backward diffusive motion.  According to Lemma~\ref{l:boundary_behavior_mue_tilde}
\begin{equation}   \label{eq:mue_tilde_zero}
    \mut(i)=
    \left\{   \begin{array}{ll}
		  \mu(i) & \mbox{ \; if $2\abs{\mu(i)} \leq \abs{v^{'}(i)}$} \\
		  v^{'}(i)-\mu(i) & \mbox{ \; if $2\abs{\mu(i)} \geq \abs{v^{'}(i)}$} 
	\end{array}  \right.,\quad i\in\{0,1\},
\end{equation}
and again an application of Feller's boundary criteria shows that the boundary point $i$ 
is accessible to the backward diffusive motion whenever $2\mu(i)\neq v^{'}(i)$.
The critical case is more subtle.  Then, $2\mut(i)=v^{'}(i)$, and so $i$ would be 
inaccessible if the drift coefficient $\mut(\cdot)$ were analytic in a neighborhood of $i$.
However, we show in Lemma~\ref{l:boundary_behavior_mue_tilde} that
\begin{equation}
  \mut(p)= \mu(i)+ \frac{v^{'}(i)}{\ln\rob{|p-i|}}
           +O\rob{\frac{\abs{p-i}}{\ln{\abs{p-i}}}},
\end{equation}
and then Feller's criteria reveal that the logarithmic singularity is just sufficient to render
the point $i$ accessible to the backward diffusive motion when $2\mut(i)=v^{'}(i)$.

Our main result states that the process $\pt(\cdot)$ has the same law as the stationary 
time reversal of the jump-diffusion $p(\cdot)$.
\begin{thm}  \label{thm:main_result}
  Assume~\ref{assumptions}.
  Let $p(\cdot)$ be the jump-diffusion on $[0,1]$ with generator $G$ as defined
  in~\eqref{eq:G_general}.
  Then the process $\rob{\pt(t)\colon t\geq0}$ is a version of the
  stationary time reversal of $\rob{p(t)\colon t\geq0}$,
  that is,
  \begin{equation}
    \rob{\pt(t)\colon t\leq T}\eqd\rob{p(T-t)\colon t\leq T}\quad \fa T\geq0
  \end{equation}
  if the distribution of $p(0)$ is the stationary distribution $\pi(p)dp$.
\end{thm}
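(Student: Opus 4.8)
The plan is to realise $\pt(\cdot)$ as the weak limit of the regularised stationary time reversals $\pt_\eps(\cdot)$ of the processes $p_\eps(\cdot)$ introduced before the statement, and to combine this with the continuity of time reversal on path space. First I would record that, for each fixed $\eps\in(0,1/2)$, the process $p_\eps(\cdot)$ is a Feller jump-diffusion with bounded jump rate $\ld$, a smooth jump kernel, and (by the argument behind Lemma~\ref{l:equilibrium_distribution}) a smooth, strictly positive stationary density $\pi_\eps$ on $[0,1]$. For such a well-behaved process the formal adjoint computation of the Introduction is rigorous: $\Gt_\eps$ is the genuine generator of a jump-diffusion $\pt_\eps(\cdot)$, and Nelson's duality~\cite{Nel58} together with~\eqref{eq:adjoint problem} identifies $\pt_\eps(\cdot)$ as the stationary time reversal of $p_\eps(\cdot)$. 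It then suffices to establish the two weak convergences $p_\eps(\cdot)\Ra p(\cdot)$ and $\pt_\eps(\cdot)\Ra\pt(\cdot)$ on $D_{[0,1]}([0,T])$: the first, via continuity of time reversal, forces $\pt_\eps(\cdot)$ to converge to the stationary time reversal of $p(\cdot)$, and uniqueness of weak limits then yields the claim.

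The forward convergence $p_\eps(\cdot)\Ra p(\cdot)$ I would deduce from the convergence $G_\eps\to G$ of generators on $\C^2([0,1])$ together with well-posedness of the martingale problem for $G$. Here one only needs to note that $p(\cdot)$ has no fixed times of discontinuity (jumps arrive at rate $\ld$), so the time-reversal map $x(\cdot)\mapsto x\rob{(T-\cdot)-}$ is almost surely continuous at $p(\cdot)$ and the continuous mapping theorem~\cite{EK86} applies. This already shows that $\pt_\eps(\cdot)$ converges in distribution to the stationary time reversal of $p(\cdot)$.

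The substantial step is to identify this limit with the constructed process, i.e.\ to prove $\pt_\eps(\cdot)\Ra\pt(\cdot)$. Passing to a Skorokhod representation, I would argue along successive excursions between boundary jumps. On the diffusive part this reduces to the convergence $\At_\eps\to\At$ of the reversed generators, which follows from the locally uniform convergence $\pi_\eps\to\pi$ and $\pi_\eps^{'}\to\pi^{'}$ on $(0,1)$ obtained from the differential characterisation of the stationary densities; the weighted occupation densities then converge to the local time $\Lt_p(\cdot)$ of~\eqref{eq:occupation_time_formula_Lt}. For the jump times I would use the time-change representation of the first jump of $\pt_\eps(\cdot)$ out of the top layer: it fires when the compensator $\ld\kappa_\eps\,\tfrac1\eps\int_0^t\tfrac{1}{\pi_\eps(\pt_\eps(s))}\1_{(1-\eps,1]}(\pt_\eps(s))\,ds$ first exceeds an independent unit exponential. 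By the occupation time formula this compensator equals $\ld\kappa_\eps\tfrac1\eps\int_{1-\eps}^1\tfrac{\mt_\eps(p)}{\pi_\eps(p)}\Lt^\eps_p(t)\,dp$, which by continuity of $p\mapsto\Lt_p(t)$ converges to $r_1\Lt_1(t)$, with $r_1$ the rate of~\eqref{eq:rate}. Hence the first boundary jump of the limit occurs exactly when $\Lt_1(t)\geq R_1$ with $R_1\sim\mathrm{Exp}(r_1)$, while the post-jump law converges to $\tfrac{1}{\kappa_1}w_1(q)\pi(q)\,dq$; the statement at $0$ is symmetric. Iterating over excursions via the strong Markov property matches every ingredient in the construction of $\pt(\cdot)$.

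The hard part will be the boundary analysis in this third step: controlling $\pt_\eps(\cdot)$ uniformly in $\eps$ near $\{0,1\}$ and establishing convergence of the weighted layer occupation to the local time $\Lt_i(\cdot)$. This is most delicate in the critical case $2\abs{\mu(i)}=\abs{v^{'}(i)}$, where by Lemma~\ref{l:boundary_behavior_mue_tilde} the reversed drift carries a logarithmic correction $v^{'}(i)/\ln\abs{p-i}$ at $i$, so that accessibility of $i$ to the backward diffusion is only marginal; here the boundary asymptotics of $\pi$ from Lemma~\ref{l:boundary_behavior_pi} and of the rate from Lemma~\ref{l:rate} are precisely what permits the passage to the limit. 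When $i$ is inaccessible to the forward diffusion one has $r_i=\infty$ and $R_i=0$ by convention, and the same argument shows that the limit jumps off $i$ instantaneously, so that $i$ is excluded from the state space of $\pt(\cdot)$, consistently with~\eqref{eq:value_of_rate}. Finally, stationarity of $\pt(\cdot)$ with density $\pi$ is inherited from that of $p(\cdot)$, completing the identification.
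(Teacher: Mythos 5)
Your proposal follows the approximation route that the paper itself sketches in the Introduction and explicitly labels as ``purely heuristic''; the paper's actual proof is entirely different. It first identifies the generator $\Gt$ of the stationary time reversal analytically: integration by parts against $\pi(p)\,dp$ shows that $D_{\mt}D_{\St}$ restricted to the core $\coreGent$ is adjoint to $G$ (Lemma~\ref{l:generator_backward_process}), Mandl's theorem shows this restriction generates a Feller semigroup (Lemma~\ref{l:Gent_is_generator}), and Nelson's result then identifies the associated process with the reversal. The proof of Theorem~\ref{thm:main_result} then verifies by a direct infinitesimal computation --- the It\^o--Tanaka formula of Lemma~\ref{l:Ito_Tanaka_Y_tilde} together with the estimate $\Expectation^i\bigl[(\Lt_i(t))^2\bigr]=o(t)$ of Lemma~\ref{l:Lt_is_o_of_sqrt_t} --- that the constructed process $\pt(\cdot)$ has the same generator on $\coreGent$. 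No $\eps\to0$ limit of processes is ever taken.

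The genuine gap in your approach is the step you yourself flag as ``the substantial step'': the convergence $\pt_\eps(\cdot)\Rightarrow\pt(\cdot)$. Everything difficult is concentrated there and is asserted rather than argued. Concretely: (i) the statement that the layer occupation functionals $\tfrac{1}{\eps}\int_0^t\pi_\eps(\pt_\eps(s))^{-1}\1_{(1-\eps,1]}(\pt_\eps(s))\,ds$ converge to $r_1\Lt_1(t)$ requires joint convergence of the processes \emph{and} of their occupation measures near a degenerate boundary; local time is not a continuous functional of the path, so this does not follow from weak convergence of $\pt_\eps(\cdot)$, and it further requires uniform-in-$\eps$ control of $\pi_\eps$, $\pi_\eps'$ and $\mt_\eps/\pi_\eps$ up to the boundary, none of which is supplied (the paper's Lemmas~\ref{l:boundary_behavior_pi}, \ref{l:rate} and \ref{l:bound_on_LBsquare}--\ref{l:Lt_is_o_of_sqrt_t} are exactly the substitutes for this control, and they concern the limit object only); (ii) even for fixed $\eps$, declaring that ``the formal adjoint computation is rigorous'' and that $\Gt_\eps$ ``is the genuine generator'' of a Markov process begs the same well-posedness question at the degenerate boundary (accessibility, domain and boundary conditions of $\At_\eps$) that the paper resolves via Mandl's theorem; the smoothed jump kernel does not remove the degeneracy of $v$ at $\{0,1\}$; (iii) identifying the limit of $\pt_\eps(\cdot)$ through excursions and compensators presupposes tightness and a characterisation of the limit law, which in practice again reduces to a martingale-problem or generator argument --- i.e.\ to the content of the paper's Sections~5--7. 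In short, your outline reproduces the paper's heuristic but does not close it; the missing ingredients are precisely the local-time estimates at the boundary (in particular the $o(t)$ bound on the second moment of $\Lt_i(t)$, which handles the critical case $2\abs{\mu(i)}=\abs{v'(i)}$) and a rigorous identification of the reversed generator with its non-local boundary condition.
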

The proof of Theorem~\ref{thm:main_result} is 
deferred to Section~\ref{sec:the_backward_process}.

Theorem~\ref{thm:main_result} establishes the time reversal of the stationary
process over a fixed time interval $[0,T]$, $T<\infty$ fixed and non-random.
Readers being interested in other pathwise time reversals are referred
to the literature.
It has been shown that processes which are in 'Hunt duality' (see \cite[Chapter VI]{BG68})
are time reversals of each other.
Reversing time at the end point of an excursion from an accessible boundary point results
in the dual process being started at this boundary point, see \cite{GS81,Mi81}.
The paper of Mitro~\cite{Mi84} reverses time at inverse local time points.

The remainder of the paper is organized as follows.  The next section collects
some results concerning the stationary distribution of the jump-diffusion process~\eqref{eq:G}.
Section~\ref{sec:Jump times at inaccessible boundaries} describes the boundary 
behavior of $\pt(\cdot)$.  In particular we show that the time-reversed process jumps
immediately off of any boundary that is inaccessible to the forward diffusion.  In 
Section~\ref{sec:the generator of the time-reversed process} we identify a core for 
the generator $\tilde{G}$ satisfying the adjoint condition~\eqref{eq:adjoint problem}.
The local time process 
of $\pt(\cdot)$ is introduced and studied in Section~\ref{sec:The local time process}.
Finally, Section~\ref{sec:the_backward_process} shows that $\pt(\cdot)$ has generator 
$\Gt$.  The proof of this result depends on an application of the It\^o-Tanaka formula.

%
%
\section{The stationary distribution}

The following lemma asserts that, if the conditions of Assumption~\ref{assumptions} are satisfied,
then the jump-diffusion process $p(\cdot)$ has a unique stationary distribution on $[0,1]$.
It is also shown that this distribution has a density $\pi(\cdot)$ with respect to Lebesgue 
measure which satisfies a second-order ordinary differential equation (ODE) subject
to boundary conditions that are non-local whenever $\ld > 0$.  If $\ld = 0$, then this 
equation can be solved explicitly, leading to the familiar expression
\begin{equation}
	\pi(p) = C^{-1} \frac{1}{v(p)} \exp \Big( 2 \int^{p} \mu(q)/v(q) dq \Big),
\end{equation}
where $C$ is a normalizing constant, e.g., see Section 4.5 in \cite{Ewe04}.  Although 
a general closed-form expression for $\pi(\cdot)$ apparently does not exist when 
$\ld > 0$, $\pi(\cdot)$ can be calculated by numerically solving~\eqref{eq:pi}
using a modification of the shooting method \cite{Press92}.  In addition, below we 
give an explicit formula for the stationary density in the important special case of 
a neutral Wright-Fisher diffusion subject to recurrent bottlenecks.

%
%
\begin{lem}  \label{l:equilibrium_distribution}
  Assume~\ref{assumptions}.
  Then there exists a unique stationary distribution for the pro\-cess
  $\rob{p(t)\colon t\geq0}$. This distribution is given by $\1_{(0,1)}(p)\pi(p)dp$
  where $\pi\colon(0,1)\to(0,\infty)$ is the unique solution
  of the non-local boundary value problem
  \begin{equation}  \begin{split}  \label{eq:pi}
    \rob{(\tfrac12 v\pi)^{''}-(\mu\pi)^{'}-\lambda\pi}(p)&=0
           \quad \fa p\in(0,1)\\
    \lim_{p\to0}\rob{\mu\pi-(\tfrac12 v\pi)^{'}}(p)&=\ld\kappa_0\\
    \lim_{p\to1}\rob{\mu\pi-(\tfrac12 v\pi)^{'}}(p)&=-\ld\kappa_1\\
    \lim_{p\to0}(v\pi)(p)&=0=\lim_{p\to1}(v\pi)(p)\\
    \int_0^1 \pi(p)dp&=1,
  \end  {split}     \end  {equation}
  where $\kappa_i:=\int_0^1 w_i(p)\pi(p)dp$ for $i\in\{0,1\}$.
  Furthermore $p(t)$ converges in distribution to the stationary
  distribution as $t\to\infty$ for every initial distribution of $p(0)$.
\end  {lem}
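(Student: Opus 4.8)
The plan is to exploit the decomposition $G=A+\lambda(K-I)$, where $A$ is the diffusion generator~\eqref{eq:A_general} and $K\phi(p):=w_0(p)\phi(0)+w_1(p)\phi(1)$ is the stochastic jump kernel, and to read the stationary equation off its formal adjoint. A probability measure $\rho$ on $[0,1]$ is stationary precisely when $\int_0^1 G\phi\,d\rho=0$ for all $\phi\in C^2([0,1])$. Substituting the decomposition gives, in the distributional sense, $(A^{*}-\lambda)\rho=-\lambda K^{*}\rho$, and since $K^{*}\rho=\kappa_0\delta_0+\kappa_1\delta_1$ with $\kappa_i=\int w_i\,d\rho$, the right-hand side is supported on $\{0,1\}$. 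On the interior this reads $(A^{*}-\lambda)\rho=0$, so ellipticity of $A$ with analytic coefficients yields an analytic density $\pi$ on $(0,1)$; that $\rho$ charges neither endpoint follows because the process spends zero Lebesgue time at a single point between jumps. Integrating the diffusion part by parts twice and matching the coefficients of $\phi(0),\phi(1),\phi'(0),\phi'(1)$ (which are free) then produces exactly the interior ODE, the conditions $\lim_{p\to i}(v\pi)=0$, and the two flux conditions with right-hand sides $\pm\lambda\kappa_i$ of~\eqref{eq:pi}. Thus every stationary distribution has the claimed form with $\pi$ solving the boundary value problem.

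Conversely, to construct the density and establish uniqueness of the solution of~\eqref{eq:pi}, I would use Green's functions. Let $h_i(p)$ be the density of $\lambda$ times the occupation measure of the diffusion $A$ started at the boundary point $i$ and run for an independent time of rate $\lambda$; equivalently $(A^{*}-\lambda)h_i=0$ on $(0,1)$ with a unit-strength flux source at $i$ and vanishing flux at the opposite endpoint. Because the drift points inward ($\mu(0)>0>\mu(1)$) each $h_i$ is nonnegative and integrable, and the resolvent identity $(\lambda-A)^{-1}1\equiv 1/\lambda$ (the constant $1/\lambda$ solves $(\lambda-A)u=1$) gives the normalization $\int_0^1 h_i\,dp=1$. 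Every solution of the interior ODE carrying fluxes $\pm\lambda\kappa_i$ is a combination $\pi=\kappa_0 h_0+\kappa_1 h_1$, and imposing self-consistency $\kappa_j=\int_0^1 w_j\pi\,dp$ converts the non-local boundary conditions into the fixed-point equation $\kappa=M\kappa$ for the $2\times2$ matrix $M_{ji}:=\int_0^1 w_j h_i\,dp$. The same resolvent identity gives $\sum_j M_{ji}=\int_0^1 h_i\,dp=1$, so $M$ is column-stochastic and by Perron--Frobenius possesses a unique nonnegative fixed vector $\kappa=(\kappa_0,\kappa_1)$ up to scaling; the normalization $\kappa_0+\kappa_1=\int_0^1\pi\,dp=1$ then fixes the scale and yields a unique positive normalized $\pi$, which is the sought stationary density. (Probabilistically, $M$ is the transition matrix of the post-jump chain on $\{0,1\}$ and $h_i$ its excursion occupation densities.)

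It remains to upgrade this to uniqueness among all probability measures and to convergence from an arbitrary initial law, for which I would invoke a Doeblin minorization built on the regenerative structure of the jumps. Since jumps occur at the constant rate $\lambda$ and always land in $\{0,1\}$, the event that at least one jump occurs in a fixed window $[0,t_0]$ has probability $1-e^{-\lambda t_0}$ uniformly in the starting state; conditioning on the last such jump, after which the process sits at a boundary point and then diffuses for a macroscopic time, the transition law $P_{t_0}(x,\cdot)$ dominates a fixed measure $c\,\nu$, where $\nu$ has a density bounded below on a compact subset of $(0,1)$, uniformly in $x$. Doeblin's theorem then delivers a unique stationary distribution together with geometric convergence of $\mathcal L(p(t))$ to it in total variation, which in particular yields the claimed convergence in distribution for every initial law.

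The main obstacle is the analysis at the degenerate endpoints, where $v$ vanishes and the ODE in~\eqref{eq:pi} is singular: showing that each $h_i$ is integrable, has the correct boundary asymptotics, and carries exactly the flux $\pm\lambda\kappa_i$ requires the indicial analysis of~\eqref{eq:pi} at $0$ and $1$, which is the content of Lemma~\ref{l:boundary_behavior_pi} and on which I would rely. The two indicial exponents at $i$ are $0$ and $2\mu(i)/v'(i)-1$, both strictly greater than $-1$ under Assumption~\ref{assumptions}; this is precisely what guarantees that $\pi$ is integrable and that $v\pi$ vanishes automatically at the boundary, while still permitting $\pi$ itself to blow up integrably at an accessible endpoint.
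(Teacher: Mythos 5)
Your proposal is correct, and its central step --- deriving \eqref{eq:pi} by integrating $\int_0^1 G\phi\,\pi\,dp=0$ by parts on $[\eps,1-\eps]$ and matching the coefficients of $\phi(i)$ and $\phi'(i)$ as $\eps\to0$ --- is exactly the paper's argument. You diverge from the paper in three places, each defensibly. (i) For uniqueness of the stationary law and convergence from an arbitrary initial distribution, the paper couples two copies through the same jump times (the copies merge once both jump to the same boundary point, which happens with positive probability at every jump); you use a Doeblin minorization instead. Both work, and yours gives geometric total-variation convergence for free, but your conditioning on ``the last jump, after which the process \ldots diffuses for a macroscopic time'' needs the standard repair of conditioning on at least one jump in $[0,t_0/2]$ and none in $(t_0/2,t_0]$, since the last jump in $[0,t_0]$ may occur arbitrarily close to $t_0$. (ii) For regularity of the density, the paper invokes the It\^o--McKean eigendifferential expansion of the transition density, whereas your interior (hypo)elliptic regularity argument is shorter and suffices because $v>0$ on $(0,1)$. (iii) For uniqueness of the solution of \eqref{eq:pi}, the paper reverses the integration by parts to show that any normalized solution is a stationary density and then appeals to uniqueness of the stationary distribution; you instead solve the boundary value problem directly. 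Your representation $\pi=\kappa_0h_0+\kappa_1h_1$ is literally the paper's formula $\pi(q)=\lambda m(q)\rob{\kappa_0G_\lambda(0,q)+\kappa_1G_\lambda(1,q)}$, and reducing the non-local boundary conditions to the column-stochastic fixed point $\kappa=M\kappa$ (note $M\neq I$ because $w_0+w_1\equiv1$ and $h_0,h_1>0$, so the Perron eigenvector is unique) is a more constructive route to the same uniqueness statement. Finally, your reliance on the indicial analysis of Lemma~\ref{l:boundary_behavior_pi} for the boundary asymptotics is not circular, since that analysis concerns only the ODE and applies verbatim to $h_0$ and $h_1$.
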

\begin{proof}
  Existence and uniqueness of a stationary distribution
  $\pib(dp)$ follow from standard arguments, so we only give a sketch.
  Couple two versions of $\rob{p(t)\colon t\geq0}$ with different initial distributions
  through the same jump times such that the diffusive motions in between
  jumps are independent until they first meet and are identical thereafter.
  Due to the assumption $\mu(0)>0>\mu(1)$, the coupling is successful
  if there are no jumps, that is, if $\lambda=0$, see Theorem V.54.5 in~\cite{RW2}.
  In the presence of jumps $(\lambda>0)$, the probability that both components 
  jump to the same boundary is positive at every jump and, therefore, the two 
  components agree eventually.  As a consequence of this successful coupling 
  and of compactness of $[0,1]$, $p(t)$ converges in distribution to a probability 
  measure $\pib(dp)$ as $t\to\infty$ and $\pib(dp)$ is an invariant distribution.

  Next we prove that $\pib(\cdot)$ has a smooth density $\pi(\cdot)$.
  Denote by $(X(t))_{t\geq0}$ the diffusion governed by $A$ (see \eqref{eq:A}).
  The scale function and the speed measure associated with $A$ are
  \begin{equation}  \label{eq:def:s_m}
    S(p):=\int_{\frac{1}{2}}^p
                \exp\roB{ -\int_{\frac12}^x\frac{2\mu(z)}{v(z)}\,dz }
             dx
    \ \text{ and }\ m(p)dp:=\frac{1}{v(p)S^{'}(p)}dp,\quad p\in[0,1],
  \end  {equation}
  respectively. Existence and smoothness of the density $\pi(\cdot)$ will be
  derived from existence and uniqueness of the transition density $Q(t;p,q)$
  of $(X(t))_{t\geq0}$ with respect to the speed measure.
  Existence of $Q(t;p,q)$ is established in It\^o and McKean (1974) \cite{ItoMcKean}
  (\cite{McK56} is more detailed in a special case)
  via an eigen-differential expansion.
  To state this result more formally, we introduce the following notation.
  The interval defined in \cite{ItoMcKean} -- here denoted by $I^\bullet$ --
  is the unit interval
  closed at $0$ if $0$ is accessible, closed at $1$ if $1$ is accessible and open
  otherwise.
  For this, note that whenever $(X(t))_{t\geq0}$ hits a boundary point, it
  immediately returns to the interior $(0,1)$ because of the assumption $\mu(0)>0>\mu(1)$.
  Moreover note that the stopping time $\min\{t\geq0\colon X(t)\not\in I^\bullet\}=\infty$
  is infinity almost surely.
  The generator of $(X(t))_{t\geq0}$ is defined in \cite{ItoMcKean} via right derivatives.
  As $(X(t))_{t\geq0}$ is a regular diffusion, this generator coincides with
  \begin{equation}
    A f(p)=\frac{1}{m(p)}\frac{d}{dp}\roB{\frac{1}{S^{'}(p)}f^{'}(p)}\qquad p\in I^\bullet
  \end{equation}
  for $f\in\C^2(I^\bullet)$.
  There exists a solution
  $\mathfrak{e}(\gamma,\cdot)=\rob{\mathfrak{e}_1(\gamma,\cdot),\mathfrak{e}_2(\gamma,\cdot)}$
  of
  \begin{equation}  \begin{split}
    \roB{A\mathfrak{e}\rob{\gamma,\cdot}}(p)&=\gamma\mathfrak{e}\rob{\gamma,p}
    \qquad\fa 0<p<1\\
    \mathfrak{e}\rob{\gamma,\tfrac{1}{2}}=(1,0)&\qquad
    \frac{1}{m(\tfrac{1}{2})}\mathfrak{e}^{'}\rob{\gamma,\tfrac{1}{2}}=(0,1)
  \end{split}     \end{equation}
  for every $\gamma\in(-\infty,0]$ such that $\gamma\mapsto\mathfrak{e}(\gamma,p)$
  is continuous for every $p\in I^\bullet$.
  Based on these eigenfunctions,
  it is shown in \cite{ItoMcKean} that there exists a Borel measure
  $\mathfrak{s}(d\gamma)$ from $(-\infty,0]$ to $2\times2$ symmetric non-negative definite
  matrices
  \begin{equation}
    \mathfrak{s}(d\gamma)=\begin{pmatrix}
                            \mathfrak{s}_{11}(d\gamma) & \mathfrak{s}_{12}(d\gamma) \\
                            \mathfrak{s}_{21}(d\gamma) & \mathfrak{s}_{22}(d\gamma)
                          \end{pmatrix}
  \end{equation}
  such that
  \begin{equation}
    Q(t;p,q)=\int_{-\infty}^0 e^{\gamma t} \mathfrak{e}^T(\gamma,p)\mal\mathfrak{s}(d\gamma)
                                          \mal\mathfrak{e}(\gamma,p),
                            \quad(t,p,q)\in(0,\infty)\times I^\bullet\times I^\bullet,
  \end{equation}
  is the transition density of $(X(t))_{t\geq0}$ with respect to the speed measure $m(\cdot)$.
  Now as our jump diffusion $p(\cdot)$ could also jump to an inaccessible boundary, we need
  to extend $p\mapsto Q(t;p,q)$ onto $[0,1]$. 
  Note that if $i\in\{0,1\}$ is inaccessible,
  then $i$ is an entrance boundary due to the assumption $(-1)^i\mu(i)>0$.
  As in Problem 3.6.3 in \cite{ItoMcKean}, one uses the Markov property to extend
  $(X(t))_{t\geq0}$ to the state space $[0,1]$.
  Thus we may assume $Q(t;p,q)$ to be defined on $(0,\infty)\times[0,1]\times I^\bullet$.

  With these results on the transition density of $(X(t))_{t\geq0}$, we now
  establish existence of a smooth density of $\pib(\cdot)$.
  Define $\kappa_0\in[0,1]$, $\kappa_1:=1-\kappa_0$ by
  \begin{equation}
    \kappa_i:=\int_0^1 w_i(p)\pib(dp)\qquad\text{for }i\in\{0,1\}
  \end{equation}
  and observe that $\kappa_i$ is the probability that a stationary version
  of the process jumps to the boundary point $i$ when it jumps.
  Recall that the jump times of $p(\cdot)$ form a Poisson process with rate
  $\lambda$ and that in between jumps, $p(\cdot)$ evolves according to $A$.
  If $U$ is any Borel measurable set in $[0,1]$, then by conditioning
  on the time and distribution of the last jump, we have
  \begin{equation*}
    \pib(U)=\kappa_0\int_0^\infty \lambda e^{-\lambda t}
            \int_U Q(t;0,q)m(q)dq\,dt
            +\kappa_1\int_0^\infty \lambda e^{-\lambda t}
            \int_U Q(t;1,q)m(q)dq\,dt.
  \end{equation*}
  Interchanging integrals, we infer that
  $\pib(\cdot)$ has a density with respect to Lebesgue measure
  and we set $\pi(q)dq:=\pib(dq)$ where $\pi\colon(0,1)\to[0,\infty)$
  satisfies
  \begin{equation}  \begin{split}
    \pi(q)&=\sum_{j=0}^1\kappa_j m(q)\int_0^\infty \ld e^{-\ld t}\int_{-\infty}^0 e^{\gamma t}
            \mathfrak{e}^T(\gamma,j)\mathfrak{s}(d\gamma)\mathfrak{e}(\gamma,q)\,dt\\
          &=\sum_{j=0}^1 \kappa_j m(q)\int_{-\infty}^0 \frac{\lambda}{\lambda-\gamma}
            \mathfrak{e}^T(\gamma,j)\mathfrak{s}(d\gamma)\mathfrak{e}(\gamma,q)\,dt\\
          &=\lambda m(q)\roB{\kappa_0 G_\lambda(0,q)+\kappa_1 G_\lambda(1,q)}.
  \end{split}     \end{equation}
  The function $G_\lambda(p,q)$ is the Green's function and is $\C^2$ in
  the second variable for every $p\in[0,1]$.
  As the speed density $m(\cdot)$ is also $\C^2$ in $(0,1)$
  due to Assumption~\ref{assumptions}, we conclude that the
  stationary density $\pi(\cdot)$ is twice continuously differentiable.

  The main step of the proof is to show that $\pi(\cdot)$ satisfies~\eqref{eq:pi}.
  By Proposition 4.9.2 of Ethier and Kurtz~\cite{EK86},
  the stationary distribution $\pi(p)dp$ satisfies
  \begin{equation} \label{eq:A_phi_pi}
    \int_0^1 G\phi(p)\pi(p)dp=0
  \end  {equation}
  for all $\phi\in\mathbf{C}^2\rob{[0,1]}$.
  Let $0<\eps<\tfrac12$.
  The functions $v,\mu,\phi$ and $\pi$ are $\mathbf{C}^2$
  in $[\eps,1-\eps]$.
  Integration by parts yields
  \begin{equation}  \begin{split} \label{eq:int_by_parts}
    \lefteqn{\int_\eps^{1-\eps}G\phi\mal\pi \,dp
       -\phi(0)\int_\eps^{1-\eps}\lambda w_0\mal\pi \,dp
       -\phi(1)\int_\eps^{1-\eps}\lambda w_1\mal\pi \,dp}\\
     &=\int_\eps^{1-\eps}\phi^{''}\mal (\tfrac12 v\pi) +\phi^{'}\mal(\mu\pi)
       -\lambda\phi\pi \,dp\\
     &=[\phi^{'}\tfrac12 v\pi]_\eps^{1-\eps}
       + [\phi\mal\rob{\mu\pi-(\tfrac12 v\pi)^{'}}]_\eps^{1-\eps}
       -\int_\eps^{1-\eps}\phi\mal\eckB{(\mu\pi)^{'}-(\tfrac12 v\pi)^{''}
       +\lambda\pi}\,dp.
  \end  {split}     \end  {equation}
  By considering all functions $\phi \in \mathbf{C}^2$ with support in $(\eps,1-\eps)$
  and then letting $\eps\to0$, we conclude that $\pi(\cdot)$ satisfies the second-order 
  ODE in~\eqref{eq:pi}.  Furthermore, because the functions $G \phi$, $w_0$, $w_1$ 
  are bounded and $\pi$ is integrable, we may apply the dominated convergence theorem 
  to theintegrals on the left-hand side of~\eqref{eq:int_by_parts} as $\eps\to0$.
  Together with~\eqref{eq:A_phi_pi} this shows that
  \begin{equation}  \begin{split}
    \limepsO &[\phi^{'}\tfrac12 v\pi]_\eps^{1-\eps}
       + \phi(1)\mal\limepsO\rob{\mu\pi-(\tfrac12 v\pi)^{'}}(1-\eps)
       - \phi(0)\mal\limepsO\rob{\mu\pi-(\tfrac12 v\pi)^{'}}(\eps)\\
    &=-\phi(1)\lambda\kappa_1-\phi(0)\lambda\kappa_0.
  \end{split}     \end{equation}
  As $\phi$ was arbitrary this implies the non-local boundary conditions
  in~\eqref{eq:pi}.

  If $\pih(\cdot)$ is another normalized solution of~\eqref{eq:pi}, then
  reversing the previous arguments shows that~\eqref{eq:A_phi_pi}
  holds with $\pi$ replaced by $\pih(\cdot)$. This in turn
  implies that $\pih(p)dp$ is another stationary distribution 
  and we conclude that $\pih=\pi$. It remains to show that $\pi(\cdot)$
  is strictly positive. Assuming $\pi(p)=0$ for some $p\in(0,1)$,
  we conclude that $\pi^{'}(p)=0$ from $p$ being necessarily a global
  minimum. However, the only solution of the second-order ODE
  in~\eqref{eq:pi} satisfying $\pi(p)=0=\pi^{'}(p)$ is the zero function,
  which contradicts the assumption that $\pi(\cdot)$ is a probability density.
  \qed
\end  {proof}

\begin{rem}
Lemma 3.1 can be used to find an explicit formula for $\pi(\cdot)$ when the jump-diffusion
process is a model of a neutrally-evolving population subject to recurrent bottlenecks, i.e., 
when $p(\cdot)$ has generator
\begin{equation*}
	G \phi(p) = \frac{1}{2} p(1-p) \phi''(p) + (\mu_{0} (1-p) - \mu_{1} p) \phi'(p) +
		\lambda \Big( p \phi(1) + (1-p) \phi(0) - \phi(p) \Big).
\end{equation*}
In this case, \eqref{eq:pi} is a hypergeometric equation and, using the fact that the mean frequency
of allele $A_{1}$ in a stationary population is $\mu_{0}/(\mu_{0} + \mu_{1})$, we find that the
density $\pi(p)$ is equal to 
\begin{equation*}
	\pi(p) = C^{-1} p^{2\mu_{0}-1}(1-p)^{2\mu_{1}-1} \Big[ \mu_{0} F(1-a,1-b,2\mu_{0},p) +
		\mu_{1} F(1-a,1-b,2 \mu_{1}, 1-p) \Big],
\end{equation*}
where $C$ is a normalizing constant, $F(a,b,c;z)$ is Gauss' hypergeometric function, and
the constants $a$ and $b$ are determined (up to interchange) by the equations $a+b = 
3 - 2(\mu_{0} + \mu_{1})$ and $ab = 2(\lambda+1-\mu_{0}-\mu_{1})$.
\end{rem}

The second lemma of this section provides information on the boundary behavior
of the density of the stationary distribution. This information is derived using results on 
second-order ODEs with regular singular points.

We adopt the Landau big-O and little-o notation. In addition, for two functions $\psi_1(\cdot)$ 
and $\psi_2(\cdot)$, we write $\psi_1(p)\sim\psi_2(p)$ as $p\to i$ if both $\psi_1(p)=O\rob{\psi_2(p)}$
and $\psi_2(p)=O\rob{\psi_1(p)}$ as $p\to i$.
%
%
\begin{lem}   \label{l:boundary_behavior_pi}
  Assume~\ref{assumptions}.
  Let $\pi(\cdot)$ be the density of the stationary distribution of the 
  jump-diffusion $p(\cdot)$ corresponding to the generator~\eqref{eq:G_general}.
  Then, for $i\in\{0,1\}$, $\pi(\cdot)$ is equal to
  \begin{equation}   \label{eq:boundary_behavior_pi}
    \pi(p)  =  \left\{   \begin{array}{ll}
       C_i\abs{p-i}^{\frac{2\mu(i)-v^{'}(i)}{v^{'}(i)}}
  		     +O(1)
                  & \mbox{if $2\abs{\mu(i)} < \abs{v^{'}(i)}$}\\
  		 \frac{2\lambda\kappa_i}{\abs{v^{'}(i)}}\ln\rob{\frac{1}{|p-i|}}+O(1)
                  & \mbox{if \ \,$2\mu(i) = v^{'}(i)$ }\\
  		 \frac{2\lambda\kappa_i}{\abs{2\mu(i)-v^{'}(i)}}
           +O\rob{\abs{p-i}+
                 \abs{p-i}^{\frac{2\mu(i)-v^{'}(i)}{v^{'}(i)}}
                 }
                  & \mbox{if $2\abs{\mu(i)} > \abs{v^{'}(i)}$
                          and $\lambda w_i\not\equiv0$}\\
        C_i\abs{p-i}^{\frac{2\mu(i)-v^{'}(i)}{v^{'}(i)}}
           +O\rob{\abs{p-i}^{\frac{2\mu(i)}{v^{'}(i)}}}
                  & \mbox{if $2\abs{\mu(i)} > \abs{v^{'}(i)}$
                          and $\lambda w_i\equiv0$}
  	\end{array}  \right.
  \end{equation}
  as $p\to i$ where $C_i\in(0,\infty)$.
  In addition if $2\mu(i)=v^{'}(i)$ and $\lambda w_i\equiv 0$,
  then $\pi(i)>0$.
\end{lem}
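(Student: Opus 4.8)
The plan is to study the density near one boundary at a time. By the reflection $p\mapsto 1-p$, under which $v'(1)$ and $\mu(1)$ both become positive, it suffices to treat $i=0$, where Assumption~\ref{assumptions} gives $v'(0)>0$ and $\mu(0)>0$; the exponent and constants at $i=1$ then follow by transforming back. Throughout write $\gamma:=\frac{2\mu(0)-v'(0)}{v'(0)}\in(-1,\infty)$ for the exponent in the statement, and observe that the regimes $2|\mu(0)|<|v'(0)|$, $2\mu(0)=v'(0)$ and $2|\mu(0)|>|v'(0)|$ are exactly $\gamma<0$, $\gamma=0$ and $\gamma>0$. The point $p=0$ is a regular singular point of the ODE in~\eqref{eq:pi} with indicial exponents $0$ and $\gamma$; rather than invoke the Frobenius series directly, I would exploit this structure through the scale and speed densities of~\eqref{eq:def:s_m}.

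First I would reduce~\eqref{eq:pi} to first order. Integrating $(\tfrac12 v\pi)''-(\mu\pi)'-\lambda\pi=0$ over $[0,p]$ and inserting the flux boundary condition $\lim_{q\to0}(\mu\pi-(\tfrac12 v\pi)')(q)=\lambda\kappa_0$ yields
\[
   (\tfrac12 v\pi)'(p)-\mu(p)\pi(p)=-\lambda\kappa_0+\lambda\int_0^p\pi(q)\,dq=:F(p),
\]
where $F$ is continuous with $F(0)=-\lambda\kappa_0$, using $\pi\in L^1(0,1)$ to see $\int_0^p\pi\to0$. With $h:=\tfrac12 v\pi$ this reads $h'-\tfrac{2\mu}{v}h=F$, whose integrating factor is precisely the scale density $S'$ of~\eqref{eq:def:s_m}, so that $(S'h)'=S'F$ and
\[
   \pi=2\,m\,\beta,\qquad \beta:=S'h,\qquad \beta'=S'F,
\]
with $m=\tfrac{1}{vS'}$ the speed density. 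Everything is thus reduced to the behaviour of $\beta$ near $0$.

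Next I would insert the elementary asymptotics $S'(p)\sim c_S p^{-(\gamma+1)}$ and $m(p)\sim c_m p^{\gamma}$ as $p\to0$, where $c_S,c_m>0$ and $c_S c_m=1/v'(0)$ (these come from $\tfrac{2\mu}{v}=\tfrac{\gamma+1}{p}+O(1)$). Combined with $F(p)=-\lambda\kappa_0+o(1)$, integrating $\beta'=S'F$ separates the four cases at once: if $\gamma<0$ the product $S'F$ is integrable at $0$, so $\beta$ has a finite limit and $\pi\sim 2c_m\beta(0^+)p^{\gamma}$ (case~1); if $\gamma=0$ then $\beta\sim\lambda\kappa_0 c_S\ln(1/p)$ and hence $\pi\sim\frac{2\lambda\kappa_0}{v'(0)}\ln(1/p)$ (case~2); if $\gamma>0$ and $\kappa_0>0$ the non-integrable singularity forces $\beta\sim\frac{\lambda\kappa_0 c_S}{\gamma}p^{-\gamma}$, so $\pi\to\frac{2\lambda\kappa_0}{2\mu(0)-v'(0)}$ (case~3); and if $\gamma>0$ with $\kappa_0=0$ then $F=\lambda\int_0^p\pi=O(p^{\gamma+1})$ makes $S'F=O(1)$, $\beta$ has a finite limit, and $\pi\sim 2c_m\beta(0^+)p^{\gamma}\to0$ (case~4). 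The sub-leading error bounds are then obtained by expanding $S'$, $m$ and $F$ one further order and bootstrapping: the leading asymptotics just found are used to evaluate $\int_0^p\pi$ inside $F$ to the required precision.

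The step I expect to be the main obstacle is proving that the leading coefficient $C_0=2c_m\beta(0^+)$ is strictly positive in cases~1 and~4 (equivalently $\pi(0)>0$ in the critical case with $\lambda w_0\equiv0$). Positivity of $\pi$ gives $\beta=\pi/(2m)>0$ on $(0,1)$, hence $\beta(0^+)\ge0$; the delicate point is excluding $\beta(0^+)=0$. When $\lambda\kappa_0>0$ this is immediate, since $F<0$ near $0$ makes $\beta$ strictly decreasing there and so $\beta(0^+)\ge\beta(p)>0$. When $\lambda\kappa_0=0$ we have $F=\lambda\int_0^p\pi\ge0$ and $\beta$ non-decreasing, and I would argue by contradiction: assuming $\beta(0^+)=0$ and substituting $\pi=2m\beta$ back into $\beta(p)=\int_0^p S'F$, the asymptotics of $S'$ and $m$ give an estimate $\beta(p)\le K\int_0^p\beta(q)\,dq$, whereupon Gronwall's inequality forces $\beta\equiv0$ near $0$, contradicting $\beta>0$. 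The reflected analysis at $i=1$ then completes the proof.
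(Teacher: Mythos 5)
Your proof is correct but follows a genuinely different route from the paper's. The paper treats $p=i$ as a regular singular point of the ODE in~\eqref{eq:pi} and invokes the Frobenius theorems of Birkhoff--Rota to write $\pi$ as a combination of $|p-i|^{0}(1+h_1)$ and $|p-i|^{\gamma}(1+h_2)$ (plus a possible logarithmic term when $\gamma\in\Z$, which it then rules out for $\gamma>0$), and only afterwards uses the flux boundary condition to pin down the coefficients and their signs. You instead integrate the ODE once, absorb the flux condition into the inhomogeneity $F(p)=-\lambda\kappa_0+\lambda\int_0^p\pi$, and recognize the forward scale density $S'$ of~\eqref{eq:def:s_m} as the integrating factor, yielding the closed representation $\pi=2m\beta$ with $\beta'=S'F$. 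This buys several things: the boundary condition determines the leading singular behaviour directly rather than through separate coefficient matching; the monotonicity of $\beta$ when $\lambda\kappa_0=0$ combined with your Gronwall argument gives a cleaner proof of strict positivity of $C_i$ (and of $\pi(i)>0$ in the critical case) than the paper's, which at that point assumes an expansion $\pi(p)=cp^n+O(p^{n+1})$; and no external ODE theorems are needed. What it costs is that the sub-leading error terms must be extracted by hand via bootstrapping, as you acknowledge. One ordering issue to repair in your case~4: the bound $F=O(p^{\gamma+1})$ is not available a priori --- you must first observe that $\beta$ is non-decreasing (since $F\geq0$), hence bounded near $0$, so that $\pi=2m\beta=O(p^{\gamma})$, and only then conclude $F=O(p^{\gamma+1})$ and $S'F=O(1)$. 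With that reordering the argument closes.
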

\begin{proof}
We only consider $i=0$ as the case $i=1$ is analogous.
We begin by observing that $i = 0$ is a regular singular point 
for the differential equation in~\eqref{eq:pi}, see e.g.\ Section 9.6
in~\cite{BiRo89} for this concept.
The associated indicial equation for $0$ is
\begin{equation}
  \nu(\nu-1)+2\frac{v^{'}(0)-\mu(0)}{v^{'}(0)}\mal\nu=0
\end{equation}
and has roots
$\alpha:=0$ and $\beta:=\tfrac{2\mu(0) - v^{'}(0)}{v^{'}(0)}$.
Note that $\beta>-1$.
If $\al-\beta\not\in\Z$, then Theorem IX.7 in~\cite{BiRo89} tells us
that $\pi(\cdot)$ is equal to a linear combination of
$b_1(p):=p^\al\rob{1+h_1(p)}$ and $b_2(p):=p^\beta\rob{1+h_2(p)}$ in a neighborhood
of $0$ where $h_1$ and $h_2$ are suitable analytic functions
satisfying $h_1(0)=0=h_2(0)$.

If $\al-\beta\in\Z$, then Theorem IX.8 in~\cite{BiRo89} shows
that $\pi(\cdot)$ is equal to a linear combination of
$b_1(p)$, $b_2(p)$ and $\ln(p) b_1(p)$ in a neighborhood
of $0$.
If $2\mu(0)/v^{'}(0)\in\Nat_{\geq2}$, then assuming
$\pi(p)=-c_1\ln(p)+O(1)$, $\pi^{'}(p)=-c_1\tfrac{1}{p}+O(1)$
for some constant $c_1>0$ leads
to the contradiction
\begin{equation}
  \infty>\ld\kappa_0=\rob{\mu(0)-\tfrac12 v^{'}(0)}\lim_{p\to0}\pi(p)
                     -\tfrac12 v^{'}(0)\lim_{p\to0}p\pi^{'}(p)=\infty
\end{equation}
where we have used~\eqref{eq:pi}.
Therefore $\ln(p)$ does not contribute to $\pi(\cdot)$ if
$2\mu(0)>v^{'}(0)$.

It remains to calculate the coefficients.
In the case $2\mu(0) \neq v^{'}(0)$, insert
$\pi(b)=c_1 b_1(p)+c_2 b_2(p)$ into~\eqref{eq:pi} to obtain
the coefficient $c_1$
\begin{equation}  \begin{split}
  \lambda\kappa_0
  &=\lim_{p\to0}
     \eckB{ \rob{\mu(p)-\frac{1}{2}v^{'}(p)}\pi(p)-\frac{1}{2}v(p)\pi^{'}(p) }\\
  &=\lim_{p\to0}
     \eckB{ \rob{\mu(0)-\frac{1}{2}v^{'}(0)}\rob{c_1+c_2p^\beta}
            -\frac{1}{2}v^{'}(0)p c_2\beta p^{\beta-1} }\\
  &=\rob{\mu(0)-\frac{1}{2}v^{'}(0)}c_1.
\end{split}     \end{equation}
Of course if $\lambda\kappa_0=0$, then $\pi(\cdot)\not\equiv0$
implies $c_2>0$.
Next we show that $\lambda\kappa_0>0$ together with
$2\mu(0)< v^{'}(0)$ implies $c_2>0$.

Assuming $c_2=0$ implies that
$\pi(0)=\tfrac{2\lambda\kappa_0}{2\mu(0)-v^{'}(0)}<0$
which contradicts $\pi(\cdot)$ being a density function.
In the critical case $2\mu(0)=v^{'}(0)$, \eqref{eq:pi} implies that
\begin{equation}
  \lambda\kappa_0=-\frac{1}{2}v^{'}(0)\lim_{p\to0}\rob{p\pi^{'}(p)}.
\end{equation}
Therefore the coefficient of $-\ln(p)$ is $\tfrac{2\lambda\kappa_0}{v^{'}(0)}$.
If $2\mu(0)=v^{'}(0)$ and $\lambda\kappa_0=0$,
then assuming $\pi(0)=0$  implies $\pi(p)=c p^n+O(p^{n+1})$ with $c\neq 0$ and
$n\geq1$.
Inserting this into the ODE in~\eqref{eq:pi} leads to
\begin{equation}  \begin{split}
  0&=\frac{1}{2}v^{'}(0)c \rob{p^{n+1}}^{''}-\mu(0)c\rob{p^n}^{'}+O\rob{p^n}\\
  &=\frac{(n+1)n}{2}v^{'}(0) c p^{n-1}-n\frac{v^{'}(0)}{2}p^{n-1}+O\rob{p^n}
\end{split}     \end{equation}
as $p\to0$.
Dividing by $nv^{'}(0) c p^{n-1}/2$ and letting $p\to0$ results in
the contradiction $n+1=1$.
\qed
\end{proof}

%
%
\noindent
\section{Boundary behavior of the time-reversed process}%
\label{sec:Jump times at inaccessible boundaries}

We begin this section by characterizing the boundary behavior of the infinitesimal drift 
coefficient of the time-reversed process.  This information is of interest for two reasons.  First,
it will be used to establish that any boundary point that is accessible to the forwards-in-time
process, either diffusively or via jumps, is accessible to the diffusive motion of the 
time-reversed process.  Secondly, we also expect the time-reversed process to have 
the same state space, $[0,1]$, as the forward process.  Indeed, if a boundary point $i$ 
is inaccessible to the forward diffusive motion, then subsequent results will show that 
the time-reversed process jumps back into the interior as soon as it hits a boundary.  
If, however, $i$ is accessible to the forward diffusive motion, then because the time-reversed 
process may visit $i$ without jumping, we need to confirm that $\tilde{p}(\cdot)$ does 
not then wander outside of $[0,1]$.  To this end, we will show that $\mut(0) \geq 0$ 
whenever $0$ is accessible and similarly that $\mut(1) \leq 0$ whenever $1$ is
accessible.

%
%
\begin{lem}   \label{l:boundary_behavior_mue_tilde}
  Assume~\ref{assumptions}.
  Then the drift function $\mut(\cdot)$ of the backward diffusive motion
  defined in~\eqref{eq:A_tilde_general} satisfies
  \begin{equation}   \label{eq:boundary_behavior_mue_tilde}
      \mut(p)=
      \left\{   \begin{array}{llll}
  		  \mu(i)
           &+O\rob{\abs{p-i}}
           & \mbox{ \; if $2\abs{\mu(i)} < \abs{v^{'}(i)}$
                        or $\lambda\kappa_i=0$} \\
  		  \mu(i)+ \frac{v^{'}(i)}{\ln\rob{|p-i|}}
           &+O\rob{\frac{\abs{p-i}}{\ln{\abs{p-i}}}}
           & \mbox{ \; if \ \,$2\mu(i) = v^{'}(i)$ and $\lambda\kappa_i\neq0$} \\
  		  v^{'}(i)-\mu(i)
           &+O\rob{\abs{p-i}}
           & \mbox{ \; if $2\abs{\mu(i)} > \abs{v^{'}(i)}$
                       and $\lambda\kappa_i\neq0$}
  	\end{array}  \right.
  \end{equation}
  as $p\to i$ for $i\in\{0,1\}$.
\end  {lem}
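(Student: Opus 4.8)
The plan is to reduce the whole question to the boundary asymptotics of the stationary density $\pi$ already established in Lemma~\ref{l:boundary_behavior_pi}, after first rewriting $\mut$ in a form that exposes the jump flux into the boundary. Introduce the \emph{flux} $F(p):=\mu(p)\pi(p)-(\tfrac{1}{2}v\pi)'(p)$. The ODE in~\eqref{eq:pi} is exactly $F'=-\ld\pi$, so $F$ is continuous up to $\{0,1\}$, and the non-local boundary conditions of Lemma~\ref{l:equilibrium_distribution} read $F(0^+)=\ld\kappa_0$ and $F(1^-)=-\ld\kappa_1$. Since $(v\pi)'=2\mu\pi-2F$, the definition~\eqref{eq:A_tilde_general} of the backward drift collapses to
\[
  \mut(p)=-\mu(p)+\frac{(v\pi)'(p)}{\pi(p)}=\mu(p)-\frac{2F(p)}{\pi(p)}.
\]
This identity is the engine of the proof: $\mu$ is analytic, so $\mu(p)=\mu(i)+O(\abs{p-i})$, and the entire boundary behaviour of $\mut$ is dictated by the single ratio $2F/\pi$, whose numerator tends to the signed jump flux $(-1)^i\ld\kappa_i$ while its denominator is controlled by Lemma~\ref{l:boundary_behavior_pi}.

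I would carry out the analysis at $i=0$, the point $i=1$ being symmetric after the obvious sign changes (using $v'(1)<0$ and $\mu(1)<0$). Lemma~\ref{l:boundary_behavior_pi} separates exactly the regimes appearing in~\eqref{eq:boundary_behavior_mue_tilde}. If $2\abs{\mu(0)}<\abs{v'(0)}$, then $\pi(p)\sim C_0\,p^{(2\mu(0)-v'(0))/v'(0)}\to\infty$, so $2F/\pi\to0$ and $\mut(0)=\mu(0)$; the same conclusion holds whenever $\ld\kappa_0=0$, since then $F(0^+)=0$ and $F(p)=-\ld\int_0^p\pi$ is of higher order than $\pi(p)$ near $0$. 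If instead $2\abs{\mu(0)}>\abs{v'(0)}$ and $\ld\kappa_0\neq0$, then $\pi(0^+)=2\ld\kappa_0/(2\mu(0)-v'(0))$ is finite and positive, whence $2F(p)/\pi(p)\to 2\mu(0)-v'(0)$ and $\mut(0)=\mu(0)-(2\mu(0)-v'(0))=v'(0)-\mu(0)$. Finally, in the critical case $2\mu(0)=v'(0)$ with $\ld\kappa_0\neq0$, the density blows up logarithmically, $\pi(p)\sim\tfrac{2\ld\kappa_0}{v'(0)}\ln\tfrac{1}{p}$, so that $2F(p)/\pi(p)=-v'(0)/\ln p+o(1/\ln p)$ and hence $\mut(p)=\mu(0)+v'(0)/\ln p+\dots$. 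These three outcomes are precisely the three lines of~\eqref{eq:boundary_behavior_mue_tilde}.

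To upgrade these limiting values to the stated remainders I would substitute the \emph{full} local expansion of $\pi$ produced by the Frobenius analysis behind Lemma~\ref{l:boundary_behavior_pi}: near the regular singular point $0$, $\pi$ is an explicit combination of $1+h_1(p)$, $p^{\beta}(1+h_2(p))$ and, in the resonant case, $\ln(p)(1+h_1(p))$, with $h_1,h_2$ analytic and vanishing at $0$. Combining these with $F(p)=F(0^+)-\ld\int_0^p\pi$ and the analytic expansions of $\mu$ and $v$, and collecting the next-order contributions in $\mut=\mu-2F/\pi$, produces the remainder terms recorded in~\eqref{eq:boundary_behavior_mue_tilde}.

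The main obstacle is the critical case $2\mu(i)=v'(i)$ with $\ld\kappa_i\neq0$. There the two indicial roots of~\eqref{eq:pi} coincide, which is exactly what forces the logarithm into $\pi$ (Theorem IX.8 in~\cite{BiRo89}), so that the correction to $\mut(i)=\mu(i)$ is only of size $1/\ln\abs{p-i}$ rather than a power of $\abs{p-i}$. Dividing the bounded flux $F$ by the slowly diverging $\pi$ while controlling the error to the claimed order demands keeping the next term in the logarithmic expansions of \emph{both} $\pi$ and $F$, and this is where the bookkeeping is most delicate. It is also the case that matters most for the paper, since it is precisely this logarithmic drift singularity that, through Feller's test, renders the otherwise borderline boundary point $i$ accessible to the backward diffusive motion.
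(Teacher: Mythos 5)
Your proposal is correct in all its leading-order conclusions and reaches the three cases of~\eqref{eq:boundary_behavior_mue_tilde} with the right constants, but it organizes the computation differently from the paper. The paper works directly with the ratio $v\pi^{'}/\pi$: it differentiates the Frobenius expansions of $\pi$ supplied by Lemma~\ref{l:boundary_behavior_pi} and substitutes the result into $\mut=-\mu+(v\pi)^{'}/\pi=-\mu+v^{'}+v\pi^{'}/\pi$. You instead introduce the probability flux $F=\mu\pi-(\tfrac12 v\pi)^{'}$, observe that the stationary ODE is the first-order statement $F^{'}=-\ld\pi$ with boundary values $F(0+)=\ld\kappa_0$, $F(1-)=-\ld\kappa_1$ coming straight from the non-local boundary conditions of Lemma~\ref{l:equilibrium_distribution}, and collapse the drift to $\mut=\mu-2F/\pi$. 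The two routes are related by the exact identity $v\pi^{'}/\pi=2\mu-v^{'}-2F/\pi$, so they are algebraic rearrangements of the same computation; what your version buys is that the leading behaviour of $\mut$ near $i$ requires only the boundary value of $F$ and the undifferentiated asymptotics of $\pi$, so you never have to justify term-by-term differentiation of the singular expansion, and the case $\ld\kappa_i=0$ (where $F(i)=0$ and $F(p)=-\ld\int_i^p\pi$ is one order smaller than $\pi$) comes out particularly cleanly. The flux interpretation also makes the answer transparent: $\mut(i)=\mu(i)$ exactly when the jump inflow is negligible relative to the mass of $\pi$ near $i$, and $\mut(i)=v^{'}(i)-\mu(i)$ when $\pi(i)$ is finite and positive.

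One caveat, which applies equally to the paper's own proof: both arguments defer the verification of the stated remainder orders to "substituting the full local expansion," and if one actually carries this out the error terms are controlled by the subdominant Frobenius solution. In the first line with $\ld\kappa_i\neq0$ and $0<2\abs{\mu(i)}<\abs{v^{'}(i)}$ the correction is of order $\abs{p-i}^{1-2\abs{\mu(i)}/\abs{v^{'}(i)}}$ rather than $\abs{p-i}$, and in the critical line the natural error is of order $(\ln\abs{p-i})^{-2}$ rather than $\abs{p-i}/\ln\abs{p-i}$; your own "$o(1/\ln p)$" in the critical case is likewise weaker than what the lemma asserts. None of this affects the way the lemma is used later (Lemmas~\ref{l:when_accessible} and~\ref{l:rate} only need the leading constants and the integrability of the resulting correction to $2\mut/v$), but if you want your write-up to deliver exactly the displayed remainders you must track the next-order terms explicitly rather than asserting that they "produce the remainder terms recorded."
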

\begin{proof}
  Recall that lemma~\ref{l:boundary_behavior_pi}
  describes the asymptotic behavior of $\pi(\cdot)$ as $p\to i$.
  From this we obtain
  \begin{equation}
      \frac{v(p)\pi^{'}(p)}{\pi(p)}
      =
      \left\{   \begin{array}{lll}
  		  v^{'}(i)\frac{2\mu(i)-v^{'}(i)}{v^{'}(i)}&+O\rob{\abs{p-i}}
          & \mbox{ \; if $2\abs{\mu(i)} < \abs{v^{'}(i)}$
                         or $\lambda\kappa_i=0$} \\
  		  \frac{v^{'}(i)}{\ln{\abs{p-i}}}&+O\rob{\frac{\abs{p-i}}{\ln\abs{p-i}}}
          & \mbox{ \; if \ \,$2\mu(i) = v^{'}(i)$ and $\lambda\kappa_i\neq0$} \\
        0&+O\rob{\abs{p-i}}  
          & \mbox{ \; if $2\abs{\mu(i)} > \abs{v^{'}(i)}$ and $\lambda\kappa_i\neq0$}
  	\end{array}  \right.
  \end{equation}
  as $p\to i$
  for $i\in\{0,1\}$.
  Inserting this into
  \begin{equation}
    \mut(p):=-\mu(p)+\frac{(v\pi)^{'}(p)}{\pi(p)}
      =-\mu(i)+v^{'}(i)+O\rob{\abs{p-i}}+\frac{v(p)\pi^{'}(p)}{\pi(p)}
  \end{equation}
  results in assertion~\eqref{eq:boundary_behavior_mue_tilde}.
  \qed
\end{proof}

\begin{rem} Notice that $\mut(0) < 0$ if $\mu(0) > v'(0)$, so that the diffusive motion
of the time-reversed process need not be confined to $[0,1]$.  Nonetheless,
because the boundary $p = 0$ is inaccessible to the forward diffusion in this
case (i.e., $2\mu(0) > v'(0)$), the fact that the process $\tilde{p}(\cdot)$ immediately
jumps back into $(0,1)$ upon hitting $0$ will ensure that the jump-diffusion
is confined to $[0,1]$.  
\end{rem}

We next show that if a boundary point is accessible to the forward jump-diffusion 
$p(\cdot)$, either diffusively or via a jump, then it must be accessible to the backward diffusive 
motion governed by $\At$.  Recall the scale function $\St$ from~\eqref{eq:def:st_mt}.

\begin{lem}   \label{l:when_accessible}
  Assume~\ref{assumptions}.
  The boundary point $i\in\{0,1\}$ is accessible to the diffusive
  motion governed by $\At$, that is $\St(i)\in\R$, if and only if $i$ is accessible
  to the forward jump-diffusion $p(\cdot)$, that is,
  if $\lambda w_i\not\equiv0$ or $\abs{\mu(i)}<\abs{v^{'}(i)}$.
\end{lem}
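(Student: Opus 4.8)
The plan is to take the characterization of accessibility at face value: by the parenthetical in the statement, $i$ is accessible to the $\At$-diffusion exactly when $\St(i)\in\R$, and since $\St(i)=\int_{1/2}^i \St'(x)\,dx$ with $\St'(x)=\exp\rob{-\int_{1/2}^x \tfrac{2\mut(z)}{v(z)}\,dz}>0$, this reduces to deciding whether $\St'$ is integrable in a neighbourhood of $i$. Thus the whole lemma comes down to reading off the order of $\St'(x)$ as $x\to i$ and applying elementary integrability thresholds. I would treat $i=0$ and note $i=1$ is symmetric. The only input needed is the leading behaviour of $\tfrac{2\mut(z)}{v(z)}$ as $z\to0$, which I obtain by combining Lemma~\ref{l:boundary_behavior_mue_tilde} with $v(z)=v^{'}(0)z\rob{1+O(z)}$; writing $a:=\tfrac{2\mu(0)}{v^{'}(0)}>0$, the three regimes $2\abs{\mu(0)}<\abs{v^{'}(0)}$, $2\mu(0)=v^{'}(0)$, $2\abs{\mu(0)}>\abs{v^{'}(0)}$ correspond to $a<1$, $a=1$, $a>1$.

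Next I would run the three cases of Lemma~\ref{l:boundary_behavior_mue_tilde}. If $2\abs{\mu(0)}<\abs{v^{'}(0)}$ or $\ld\kappa_0=0$, then $\mut(z)=\mu(0)+O(z)$, so $\tfrac{2\mut(z)}{v(z)}=\tfrac{a}{z}+O(1)$ with an integrable $O(1)$ remainder, giving $\St'(x)\sim x^{-a}$, which is integrable at $0$ iff $a<1$. If $2\abs{\mu(0)}>\abs{v^{'}(0)}$ and $\ld\kappa_0\neq0$, then $\mut(z)=v^{'}(0)-\mu(0)+O(z)$, so $\tfrac{2\mut(z)}{v(z)}=\tfrac{2-a}{z}+O(1)$ and $\St'(x)\sim x^{a-2}$, which is integrable at $0$ since $a>1$. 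The delicate case is the critical one, $2\mu(0)=v^{'}(0)$ with $\ld\kappa_0\neq0$: here the logarithmic correction $\mut(z)=\mu(0)+\tfrac{v^{'}(0)}{\ln\abs{z}}+O\rob{\tfrac{\abs{z}}{\ln\abs{z}}}$ yields $\tfrac{2\mut(z)}{v(z)}=\tfrac1z+\tfrac{2}{z\ln z}+(\text{integrable})$, whence $\St'(x)\sim \tfrac{1}{x\rob{\ln(1/x)}^2}$. Since $\int_0 \tfrac{dx}{x(\ln(1/x))^2}<\infty$ (substitute $u=\ln(1/x)$) while $\int_0 x^{-1}\,dx=\infty$, the extra $(\ln)^{-2}$ factor is precisely what makes $\St'$ integrable.

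Collecting the cases, $\St(0)\in\R$ iff $a<1$ or $\ld\kappa_0\neq0$, i.e. iff $2\abs{\mu(0)}<\abs{v^{'}(0)}$ or $\ld\kappa_0\neq0$. To finish I would translate these two conditions: $\ld\kappa_i\neq0$ iff $\ld w_i\not\equiv0$, because $\kappa_i=\int_0^1 w_i(p)\pi(p)\,dp$ and $\pi>0$ on $(0,1)$ by Lemma~\ref{l:equilibrium_distribution}; and $2\abs{\mu(i)}<\abs{v^{'}(i)}$ is exactly the Feller criterion for accessibility of the forward diffusive motion recalled before the lemma, while $\ld w_i\not\equiv0$ is exactly the condition that a jump can carry the forward process to $i$. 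Hence $\St(i)\in\R$ iff $i$ is accessible to the forward jump-diffusion, as claimed.

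The routine part is the two non-critical cases, where $\St'$ is a pure power and the conclusion is immediate. The genuine subtlety — and the step I expect to cost the most care — is the critical boundary $2\mu(i)=v^{'}(i)$: naively $2\mut(i)=v^{'}(i)$ suggests an inaccessible boundary, and it is only the $\rob{\ln\abs{p-i}}^{-1}$ correction supplied by Lemma~\ref{l:boundary_behavior_mue_tilde} that lowers the order of $\St'$ from $x^{-1}$ to $x^{-1}(\ln(1/x))^{-2}$ and thereby restores integrability. For full rigour one should also check that finiteness of $\St(i)$ is here a bona fide accessibility criterion rather than a bare scale condition: in each accessible case the companion Feller integral $\int^i\rob{\St(i)-\St}\,d\mt$ is also finite, since the speed density $\mt=\tfrac{1}{v\St'}$ has the reciprocal order, so no entrance- or natural-type boundary arises.
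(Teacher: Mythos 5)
Your proposal is correct and follows essentially the same route as the paper's proof: reduce accessibility to integrability of $\St'$ near $i$ via Feller's criterion, feed the asymptotics of $\mut$ from Lemma~\ref{l:boundary_behavior_mue_tilde} into $\St'$, and check the three regimes case by case, with the critical case $2\mu(i)=v'(i)$ saved by the $\rob{\ln\abs{p-i}}^{-2}$ factor exactly as in the paper's display~\eqref{eq:St_asymptotic}. Your closing verification that the companion Feller integral $\int^i\rob{\St(i)-\St}\,d\mt$ is also finite is a point the paper leaves implicit (it cites finiteness of $\St(i)$ alone), and your use of $2\abs{\mu(i)}<\abs{v'(i)}$ is the condition the paper's own proof actually establishes.
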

\begin{proof}
  W.l.o.g.\ we only prove the case $i=0$.
  According to Lemma 15.6.1 in~\cite{KT2}, the boundary point $0$
  is accessible if and only if $\St(0+)$ is finite.  (This is a special
  case of Feller's boundary classification criteria.)
  Substituting the asymptotic expression for $\mut$ near $p = 0$
  (see Lemma~\ref{l:boundary_behavior_mue_tilde}) into the definition
  of $\St$, we obtain in the case $\lambda\kappa_0>0$
  \begin{equation}  \label{eq:St_asymptotic}
     \St^{'}(p):=\exp\roB{ -\int_{\frac12}^p\frac{2\mut(z)}{v(z)}\,dz }
      \sim
      \left\{   \begin{array}{lll}
  		  p^{-\frac{2\mu(0)}{v^{'}(0)}}
             & \mbox{ \; if $2\abs{\mu(i)} < \abs{v^{'}(i)}$} \\
  		  \frac{1}{p}\frac{1}{\rob{\ln(p)}^2}
             & \mbox{ \; if \ \,$2\mu(i) = {v^{'}(i)}$} \\
  		  p^{-\frac{2v^{'}(0)-2\mu(0)}{v^{'}(0)}}
             & \mbox{ \; if $2\abs{\mu(i)} > \abs{v^{'}(i)}$}
  	\end{array}  \right.
  \end{equation}
  as $p\to 0$. In all three cases, $\St^{'}(\cdot)$ is integrable
  over $(0,\tfrac12]$. The case $\lambda\kappa_0=0$ follows
  from similar arguments.
  \qed
\end{proof}

The following lemma shows that the rate constant $r_i$ (defined in~\eqref{eq:rate}) 
is equal to infinity if the boundary point $i\in\{0,1\}$ is inaccessible to the forward 
diffusive motion.  Therefore $\pt(\cdot)$ jumps whenever it hits $i$ as
$\Lt_i(t)\geq0=R_i$.  In addition, if there are no jumps in the forward process,
then $r_i=0$, and $\pt(\cdot)$ never jumps as $\Lt_i(t)<\infty=R_i$.
%
%
\begin{lem}   \label{l:rate}
  Assume~\ref{assumptions}.
  Then the rate constant $r_i$ used to define the jump times of $\pt(\cdot)$
  satisfies
  \begin{equation}   \label{eq:l:rate}
    r_i:=\lim_{p\to i}\roB{\frac{\mt(p)}{\pi(p)}\lambda \kappa_i}
    \left\{   \begin{array}{ll}
  		  \in(0,\infty)
                   & \mbox{ \; if $2\abs{\mu(i)} < \abs{v^{'}(i)}$
                                 and $\lambda w_i(\cdot)\not\equiv0$ }  \\
        =\infty    & \mbox{ \; if $2\abs{\mu(i)} \geq \abs{v^{'}(i)}$
                              and $\lambda w_i(\cdot)\not\equiv0$}\\
        =0         & \mbox{ \; if $\lambda w_i(\cdot)\equiv0$}
  	\end{array}  \right.
  \end{equation}
  for $i\in\{0,1\}$.
\end  {lem}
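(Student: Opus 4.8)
The plan is to reduce the entire statement to the boundary asymptotics of the single product $v(p)\St'(p)\pi(p)$. From the definition of the speed density in~\eqref{eq:def:st_mt} one has $\mt(p)/\pi(p)=1/\rob{v(p)\St'(p)\pi(p)}$, and since $\lambda\kappa_i$ is a constant,
\begin{equation*}
  r_i=\lambda\kappa_i\cdot\lim_{p\to i}\frac{1}{v(p)\St'(p)\pi(p)}.
\end{equation*}
So it suffices to determine $\lim_{p\to i}v(p)\St'(p)\pi(p)$. First I would dispose of the case $\lambda w_i\equiv0$: then either $\lambda=0$ or $w_i\equiv0$, and in both cases $\lambda\kappa_i=0$ (when $w_i\equiv0$ we have $\kappa_i=\int_0^1 w_i\pi\,dp=0$). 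Since $\mt(\cdot)/\pi(\cdot)$ is finite and positive on the open interval, the expression under the limit in~\eqref{eq:l:rate} is identically zero, giving $r_i=0$.

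For the remaining cases $\lambda w_i\not\equiv0$, so $\lambda>0$ and $w_i\not\equiv0$; because $\pi(\cdot)>0$ by Lemma~\ref{l:equilibrium_distribution}, this yields $\lambda\kappa_i>0$, which is exactly the regime $\lambda\kappa_0>0$ under which the expansion~\eqref{eq:St_asymptotic} of $\St'$ was derived in the proof of Lemma~\ref{l:when_accessible}. The next step assembles three asymptotic inputs as $p\to i$ (I treat $i=0$, the case $i=1$ being symmetric): the elementary $v(p)\sim v^{'}(0)\,p$ coming from $v(0)=0$; the expansion~\eqref{eq:St_asymptotic} for $\St'(p)$, which itself rests on substituting the expansion of $\mut$ from Lemma~\ref{l:boundary_behavior_mue_tilde} into the definition of $\St'$; and the expansion~\eqref{eq:boundary_behavior_pi} of Lemma~\ref{l:boundary_behavior_pi} for $\pi(p)$. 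The proof is then a matter of multiplying these three expansions in each of the three boundary regimes.

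In the regime $2\abs{\mu(0)}<\abs{v^{'}(0)}$ the exponents of $p$ cancel, $1-\tfrac{2\mu(0)}{v^{'}(0)}+\tfrac{2\mu(0)-v^{'}(0)}{v^{'}(0)}=0$, so $v(p)\St'(p)\pi(p)\to v^{'}(0)C_0\in(0,\infty)$ and hence $r_0=\lambda\kappa_0/\rob{v^{'}(0)C_0}\in(0,\infty)$. In the regime $2\abs{\mu(0)}>\abs{v^{'}(0)}$ one has $\St'(p)\sim p^{2\mu(0)/v^{'}(0)-2}$ while $\pi(p)$ tends to a positive constant, so the product behaves like $p^{(2\mu(0)-v^{'}(0))/v^{'}(0)}$ with positive exponent, tends to $0$, and $r_0=\infty$. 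Finally, in the critical regime $2\mu(0)=v^{'}(0)$ both factors carry logarithms, $\St'(p)\sim 1/\rob{p(\ln p)^2}$ and $\pi(p)\sim\tfrac{2\lambda\kappa_0}{v^{'}(0)}\ln(1/p)$, so the product is $\sim 2\lambda\kappa_0/\abs{\ln p}\to0$ and again $r_0=\infty$. I expect the critical case to be the only delicate point: there one must retain the competing logarithmic factors from $\St'$ and from $\pi$ and verify that their net effect still drives the product to $0$ (equivalently $r_0$ to $\infty$); every other regime is routine power counting against the already-established expansions.
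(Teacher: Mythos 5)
Your proposal is correct and follows essentially the same route as the paper: the paper likewise disposes of the case $\lambda\kappa_i=0$ trivially, then combines the asymptotics of $\St'$ from~\eqref{eq:St_asymptotic} (equivalently, of $\mt=1/(v\St')$) with the boundary expansion of $\pi$ from Lemma~\ref{l:boundary_behavior_pi} and reads off the limit in each of the three regimes, including the logarithmic cancellation in the critical case. The only caveat is that since $\sim$ here means mutual big-O rather than asymptotic equivalence, your explicit value $r_0=\lambda\kappa_0/(v^{'}(0)C_0)$ in the first regime is more than the estimates justify, but the membership $r_0\in(0,\infty)$ is all that is claimed and all that follows.
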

\begin{proof}
  W.l.o.g.\ we assume $i=0$ as the case $i=1$ is similar.
  If $\lambda\kappa_0=0$, then $r_0=0$ is trivially correct.
  Assume $\lambda\kappa_0>0$ for the rest of the proof.
  The asymptotic behavior of the scale density $\St^{'}(\cdot)$
  is given in~\eqref{eq:St_asymptotic}.
  From this we derive
  the asymptotic behavior of the speed density $\mt(p)$
  (defined in~\eqref{eq:def:st_mt}) as $p\to 0$
  \begin{equation} \label{eq:asymptotic_behavior_mt}
    \mt(p)=\frac{2}{v(p)\St^{'}(p)}
      \sim
      \left\{   \begin{array}{lll}
  		  p^{\frac{2\mu(0)-v^{'}(0)}{v^{'}(0)}}
             & \mbox{ \; if $2\mu(0) < v^{'}(0)$} \\
  		  \rob{\ln(p)}^2
             & \mbox{ \; if $2\mu(0) = v^{'}(0)$} \\
  		  p^{\frac{v^{'}(0)-2\mu(0)}{v^{'}(0)}}
             & \mbox{ \; if $2\mu(0) > v^{'}(0)$}.
  	\end{array}  \right.
  \end{equation}
  Compare~\eqref{eq:asymptotic_behavior_mt}
  with the boundary behavior of $\pi(\cdot)$
  (see Lemma~\ref{l:boundary_behavior_pi})
  to obtain~\eqref{eq:l:rate}.
  \qed
\end{proof}

%
%
\section{The generator of the time-reversed process}%
\label{sec:the generator of the time-reversed process}

In this section we identify the generator of the time-reversed process and
show that this operator satisfies the duality condition given in~\eqref{eq:G}.  That this
operator is also the generator of the jump-diffusion process $\pt(\cdot)$ described
in Section~\ref{sec:main_result} will be established in the final two sections of 
the paper.

The following notation will be needed to formulate the generator of the 
time-reversed process.  If $\nu(dp)=f(p)dp$ is a measure on $[0,1]$ with 
continuous density $f\colon(0,1)\to(0,\infty)$ with respect to Lebesgue measure, 
then we write
\begin{equation}
  \roB{D_\nu \psi}(p):=\lim_{y\to p}\frac{\psi(y)-\psi(p)}{\int_p^y f(q)dq}
  \qquad\fa p\in(0,1)
\end{equation}
whenever this limit exists in $\R$ and denote by
\begin{equation}  \begin{split}
  \Dom(D_\nu):=
  \Bigl\{
    \psi\in\C\rub{[0,1]}\colon& D_\nu \psi(p)\text{ exists}\fa p\in(0,1),
    \ D_\nu  \psi\text{ is continuous}\\
    &\text{in }(0,1),\,D_\nu \psi(0+)\text{ and }D_\nu \psi(1-)
    \text{ exist in }\R
  \Bigr\}
\end{split}     \end{equation}
the subset of functions which are mapped to continuous functions on $[0,1]$.
Note that $\psi\in\C^1\rob{(0,1)}$ and $D_\nu \psi(p)=\tfrac{\psi^{'}(p)}{f(p)}$, $p\in(0,1)$,
for every $\psi\in\Dom(D_\nu)$.  For $\psi\in\Dom(D_\nu)$ the definition of $D_\nu$ 
extends to the boundary via $D_\nu\psi(0):=D_\nu\psi(0+)$ and $D_\nu\psi(1):=D_\nu\psi(1-)$.
In this notation, the generator $\At$ of the backward diffusive motion reads as
\begin{equation*}
  \rub{\At\psi}(p)=\tfrac12v(p)\psi^{''}(p)+\mut(p)\psi^{'}(p)
     =\tfrac12v(p)\St^{'}(p)
       \ruB{  \frac{1}{\St^{'}}\psi^{''}
             +\frac{1}{\St^{'}}\frac{2\mut}{v}\psi^{'}
           }(p)
     =D_{\mt}D_{\St}\psi(p)
\end{equation*}
for every $\psi\in\C^2\rub{[0,1]}$.
The following set will be a core for the generator of the time-reversed process
\begin{equation}  \begin{split}  \label{eq:coreGent}
  \coreGent:=
    \biggl\{
      \psi\in\Dom\rub{D_\St}\colon&
      D_\St\psi\in\Dom\rub{D_\mt } \text{ and for }i\in\{0,1\}
    \\
    \lim_{p\to i}&\rub{\tfrac12 v\pi\psi^{'}}(p)
           =(-1)^{i+1}\int_0^1 \rub{\psi(p)-\psi(i)}\lambda w_i(p)\pi(p)\,dp
    \biggr\}.
\end  {split}     \end  {equation}
The following lemma asserts that the restriction of $D_{\mt}D_{\St}$ to $\coreGent$
extends to a strong generator of a Markov process.  Indeed, this can be deduced
from Theorem II.4 of Mandl (1968)\nocite{Mandl} which shows that the restriction 
of $D_{\mt}D_{\St}$ to 
\begin{equation}  \begin{split}  \label{eq:restricted_domain}
  \biggl\{\psi\in\Dom\rob{D_\St}\colon& D_\St\psi\in\Dom(D_\mt)
       \text{ and if }i\in\{0,1\}\text{ is accessible:}\\
  &\chi_i \psi(i)+\int_0^1\frac{\psi(i)-\psi(p)}{\abs{p_i(i)-p_i(p)}}  d q_i(p)
     +\eta_i D_\mt D_\St\psi(i)=0\biggr\}
\end{split}     \end{equation}
is the strong generator of a Feller semigroup if $\chi_i$ and $\eta_i$ are both 
non-negative, if $q_i$ is a non-decreasing function on $[0,1]$, and if $p_i$ is 
continuous, non-decreasing and equal to $\St$ in a neighborhood of $i$, $i=0,1$.  
Only the case $0=\chi_0 =\chi_1=\eta_0=\eta_1=q(1-)-q(0+)\neq q(1)-q(0)$ is 
excluded.  The quotient in~\eqref{eq:restricted_domain} is to be interpreted as 
$(-1)^{i+1}D_\St\psi(i)$ for $p=i\in\{0,1\}$ and the integral with respect to $dq_i(p)$ 
denotes the Lebesgue-Stiltjes integral with respect to $q_i$.
%
%
\begin{lem}   \label{l:Gent_is_generator}
  Assume~\ref{assumptions}.
  The restriction of $D_\mt D_\St$ to the set $\coreGent$
  extends to a strong generator
  of a Markov process.
\end  {lem}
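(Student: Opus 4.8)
The plan is to read off the assertion directly from Mandl's Theorem~II.4 (quoted above), by exhibiting $\coreGent$ as the domain~\eqref{eq:restricted_domain} for an explicit choice of boundary data $(\chi_i,\eta_i,q_i,p_i)$ and then checking Mandl's sign and monotonicity hypotheses. The first task is to rewrite the non-local condition in the definition~\eqref{eq:coreGent} of $\coreGent$ in terms of the scale derivative $D_\St$. From $\mt(p)=1/(v(p)\St^{'}(p))$ (see~\eqref{eq:def:st_mt}) one has the pointwise identity
\[
  \tfrac12 v(p)\pi(p)\psi^{'}(p)=\frac{\pi(p)}{2\mt(p)}\,D_\St\psi(p),\qquad p\in(0,1),
\]
so the whole boundary term is governed by $\ell_i:=\lim_{p\to i}\pi(p)/\mt(p)$. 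Combining the boundary asymptotics of $\pi$ from Lemma~\ref{l:boundary_behavior_pi} with those of $\mt$ established inside the proof of Lemma~\ref{l:rate}, I would show that $\ell_i$ exists and is finite, $\ell_i\in[0,\infty)$, being strictly positive exactly when $2\abs{\mu(i)}<\abs{v^{'}(i)}$ and equal to $0$ otherwise. Since $D_\St\psi$ extends continuously to the boundary for every $\psi\in\coreGent$, this yields $\lim_{p\to i}\rob{\tfrac12 v\pi\psi^{'}}(p)=\tfrac{\ell_i}{2}D_\St\psi(i)$. When $\St(i)=\infty$ (i.e.\ $i$ is inaccessible to the backward diffusion, equivalently, by Lemma~\ref{l:when_accessible}, $\ld w_i\equiv0$ and $2\abs{\mu(i)}>\abs{v^{'}(i)}$) both $\ell_i$ and the right-hand integral in~\eqref{eq:coreGent} vanish, so the $\coreGent$-relation degenerates to $0=0$; hence $\coreGent$ constrains $\psi$ only at the backward-accessible boundaries, precisely as in~\eqref{eq:restricted_domain}.

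Next, at each accessible $i$ I would read off Mandl's data. Take $\chi_i:=0$, $\eta_i:=0$, let $p_i$ be any bounded continuous non-decreasing function on $[0,1]$ coinciding with $\St$ in a neighbourhood of $i$, and set
\[
  q_i(dp):=\tfrac{\ell_i}{2}\,\delta_i(dp)+\abs{p_i(i)-p_i(p)}\,\ld\,w_i(p)\pi(p)\,dp.
\]
The atom at $i$ contributes $(-1)^{i+1}\tfrac{\ell_i}{2}D_\St\psi(i)$ to Mandl's integral (the quotient being interpreted as $(-1)^{i+1}D_\St\psi(i)$ at $p=i$), while in the absolutely continuous part the factor $\abs{p_i(i)-p_i(p)}$ cancels the denominator and leaves $\int_0^1\rob{\psi(i)-\psi(p)}\ld w_i(p)\pi(p)\,dp$. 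Multiplying the $\coreGent$-relation by $(-1)^{i+1}$ and inserting the limit from the previous paragraph shows that Mandl's boundary condition is \emph{identical} to the defining condition of $\coreGent$. Mandl's structural hypotheses then hold by construction: $\chi_i=\eta_i=0\ge0$; $q_i$ is a non-negative, hence non-decreasing, measure because $\ell_i\ge0$ and $w_i,\pi\ge0$; and $p_i$ is continuous, non-decreasing and equal to $\St$ near $i$. Thus $\coreGent$ is exactly the domain~\eqref{eq:restricted_domain} for this data, and Mandl's theorem identifies $D_\mt D_\St|_{\coreGent}$ as the strong generator of a Feller, and therefore Markov, semigroup.

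Before concluding I must rule out the single configuration excluded from Mandl's theorem, namely $\chi_0=\chi_1=\eta_0=\eta_1=0$ together with $q(1-)-q(0+)=0\neq q(1)-q(0)$. In our data this forces the interior densities $\ld w_i\pi$ to vanish identically, i.e.\ $\ld=0$. For $\ld>0$ there is genuine jump mass in $(0,1)$, so $q(1-)-q(0+)>0$ and the exclusion never applies. The residual case $\ld=0$ is the time-reversible reflecting diffusion with $\At=A$, whose generator is classical, so there the assertion holds by inspection and can be dealt with in one line.

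The crux of the argument, and the step I expect to demand the most care, is the boundary-limit identity $\lim_{p\to i}\rob{\tfrac12 v\pi\psi^{'}}(p)=\tfrac{\ell_i}{2}D_\St\psi(i)$ together with the existence of $\ell_i\in[0,\infty)$. Establishing the value of $\ell_i$ forces one to match the case-by-case asymptotics of Lemmas~\ref{l:boundary_behavior_pi} and~\ref{l:rate}. The delicate regime is the critical one $2\mu(i)=v^{'}(i)$ with $\ld\kappa_i\neq0$, where $\pi\sim c\,\ln(1/\abs{p-i})$ while $\mt\sim(\ln\abs{p-i})^2$, so that $\ell_i=0$; recognising that a vanishing atom here encodes an \emph{immediate-jump} rather than a reflecting boundary condition, consistent with $r_i=\infty$ in~\eqref{eq:value_of_rate}, is the conceptual subtlety that makes the uniform choice of $q_i$ above legitimate in every boundary regime.
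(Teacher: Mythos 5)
Your strategy coincides with the paper's: reduce to Mandl's Theorem~II.4 by exhibiting $\coreGent$ as the domain~\eqref{eq:restricted_domain} with $\chi_i=\eta_i=0$, $p_i$ a monotone continuous function agreeing with $\St$ near $i$, and $q_i$ consisting of an atom at $i$ plus an absolutely continuous part with density proportional to $\abs{p_i(i)-p_i(\cdot)}\,w_i\pi$. Your identification of the atom as $\tfrac{\ell_i}{2}$ with $\ell_i:=\lim_{p\to i}\pi(p)/\mt(p)$ is a clean repackaging of the paper's constant $c_i$ (which the paper obtains by matching the asymptotics of $v\pi\psi'$ against $D_\St\psi$ --- the same computation), and your explicit treatment of Mandl's excluded case is a point the paper passes over in silence.

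There is, however, a concrete error in your characterization of $\ell_i$: it is \emph{not} true that $\ell_i=0$ whenever $2\abs{\mu(i)}\geq\abs{v^{'}(i)}$. In the sub-case $\lambda w_i\equiv 0$ with $2\abs{\mu(i)}\geq\abs{v^{'}(i)}$ one has $\mut(p)=\mu(i)+O(\abs{p-i})$ by Lemma~\ref{l:boundary_behavior_mue_tilde}, hence $\mt(p)\sim\abs{p-i}^{2\mu(i)/v^{'}(i)-1}$, while Lemma~\ref{l:boundary_behavior_pi} gives $\pi(p)\sim\abs{p-i}^{(2\mu(i)-v^{'}(i))/v^{'}(i)}$ (resp.\ $\pi(i)>0$ and $\mt$ bounded in the critical case); so $\pi$ and $\mt$ are of the same order and $\ell_i\in(0,\infty)$. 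The correct statement is that $\ell_i=0$ iff $2\abs{\mu(i)}\geq\abs{v^{'}(i)}$ \emph{and} $\lambda w_i\not\equiv 0$, i.e.\ iff $r_i=\infty$ (indeed $r_i=\lambda\kappa_i/\ell_i$). This breaks the step where you argue that the $\coreGent$-relation degenerates to $0=0$ at a boundary inaccessible to the backward diffusion, since that is precisely the sub-case $\lambda w_i\equiv0$, $2\abs{\mu(i)}\geq\abs{v^{'}(i)}$ (note also that the critical case belongs here, so the inequality must be non-strict) and you invoke $\ell_i=0$ there. The conclusion is still true, but for a different reason: at such a boundary $\St^{'}$ is non-integrable, so any bounded $\psi\in\Dom\rob{D_\St}$ must have $D_\St\psi(i)=0$, whence $\lim_{p\to i}\rob{\tfrac12 v\pi\psi^{'}}(p)=\tfrac{\ell_i}{2}D_\St\psi(i)=0$. (The paper establishes the same vacuousness directly: if $\lim_{p\to i}(v\pi\psi^{'})(p)=C\neq0$, then $\psi(p)\gtrsim\ln\rob{-\ln\abs{p-i}}$ diverges, contradicting $\psi\in\C\rob{[0,1]}$.) With this one repair your proposal goes through and is essentially the paper's proof.
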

\begin{proof}
  By Theorem II.4 in~\cite{Mandl}, it suffices to prove that $\coreGent$
  is of the form~\eqref{eq:restricted_domain}.
  According to Lemma~\ref{l:when_accessible}, the boundary point
  $i\in\{0,1\}$ is accessible to the diffusion governed by $D_\mt D_\St$
  if and only if $\lambda w_i(\cdot)\not\equiv 0$ or $\abs{\mu(i)}<\abs{v^{'}(i)}$.
  First we show that the condition in~\eqref{eq:coreGent} is trivial
  if $i$ is inaccessible, that is, we show $\lim_{p\to i}\rob{v \pi \psi^{'}}(p)=0$ 
  for every $\psi\in \Dom(D_\St)$.  Suppose that
  \begin{equation}
    C:=\lim_{p\to i}\rob{v\pi\psi^{'}}(p)>0
  \end  {equation}
  for some function $\psi\in\Dom\rob{D_\St}$.
  By Lemma~\ref{l:boundary_behavior_pi}, $\pi(p)$ is bounded above
  by $-\Cb \ln\rob{\abs{p-i}}$ in a neighborhood of $i$ for
  some $\Cb>0$. Thus, in a neighborhood of $i$,
  \begin{equation}
    \psi^{'}(p)\geq\frac{-C}{2 v(p)\pi(p)}
         \geq \frac{C}{4\Cb \abs{v^{'}(i)}\abs{p-i}\ln\rob{\abs{p-i}}}.
  \end  {equation}
  Integrating over $[\tfrac12,p]$ implies that $\psi(p)$ is bounded below by
  $\tfrac{C}{4\Cb \abs{v^{'}(i)}}\ln\rob{-\ln(\abs{p-i})}$ as $p\to i$ which contradicts 
  $\psi\in\C(I)$.  An analogous argument applies to the case $C<0$.

  Now, let $i$ be accessible to the forward process, that is, $\lambda \kappa_i>0$ 
  or $\abs{\mu(i)}<\abs{v^{'}(i)}$.
  Note that $\St(\cdot)$ is a bounded continuous non-decreasing function
  in a neighborhood of $i$.  Choose $\chi_i,\eta_i=0$ and $p_i(p):=\St(p)$.
  Furthermore, let
  \begin{equation}
    q_i(p):=\int_0^p \abs{p_i(i)-p_i(y)}w_i(y)\pi(y)\,dy
     +c_0\1_{(0,1]}(p)\1_{i=0} +c_1\1_1(p)\1_{i=1}
  \end  {equation}
  where $c_0,c_1\in[0,\infty)$ are to be chosen later.  Note that $q_i$ is bounded  
  and that $dq_i$ puts mass $c_i$ on the point $i$.  With these definitions, the 
  condition in~\eqref{eq:restricted_domain} takes the form
  \begin{equation}   \label{eq:takes_the_form}
    \int_0^1 \rob{\psi(i)-\psi(p)}w_i(p)\pi(p)\,dp=(-1)^i D_\St \psi(i) c_i.
  \end  {equation}
  It remains to choose $c_i\in[0,\infty)$ such that
  \begin{equation}  \label{eq:find_c}
     D_\St \psi(i)c_i =\lim_{p\to i}(\tfrac12 v\pi\psi^{'})(p)
  \end{equation}
  for every $\psi\in \coreGent$.
  Using the boundary behavior~\eqref{eq:St_asymptotic}
  of $\St(\cdot)$ and the asymptotic
  behavior~\eqref{eq:boundary_behavior_pi} of $\pit(\cdot)$,
  we arrive at
  \begin{equation*}
    \rob{v\pi\psi^{'}}(p) \sim
       \abs{p-i}\mal \pi(p)
       \mal \St^{'}(p)\mal D_{\St}\psi(p)
    \sim
    \left\{   \begin{array}{ll}
  		 D_\St\psi(p) & \mbox{if $2\abs{\mu(i)} < \abs{v^{'}(i)}$}  \\
  		 D_\St\psi(p)\frac{-1}{\ln\rob{\abs{p-i}}}
                    & \mbox{if \ \,$2\mu(i) = v^{'}(i)$}  \\
  		 D_\St\psi(p)\abs{p-i}^{\frac{2\mu(i)}{v^{'}(i)}-1}
  		              & \mbox{if $2\abs{\mu(i)} > \abs{v^{'}(i)}$}
  	\end{array}  \right.
  \end  {equation*}
  as $p\to i$. This shows that~\eqref{eq:find_c} holds with some
  constant $c_i\in[0,\infty)$.
  \qed
\end  {proof}

%
%
\begin{lem}   \label{l:generator_backward_process}
  Assume~\ref{assumptions}.
  Let the process $\rob{p(t)\colon t\geq0}$ be in equilibrium.
  Then the time-reversed process $\rob{\pb_t\colon t\geq0}$ exists, that is,
  there exists a process $\rob{\pb_t\colon t\geq0}$ satisfying
  \begin{equation}
    \rob{\pb(t)\colon t\leq T}\eqd\rob{p(T-t)\colon t\leq T}\qquad\fa T\geq0.
  \end  {equation}
  In addition, $\coreGent$ is a core for the generator $\Gt$
  of $\rob{\pb_t\colon t\geq0}$ and
  \begin{equation} \label{eq:Gent}
    \Gt \psi= \tfrac12 v\psi^{''}+\mut \psi^{'}=D_\mt D_\St \psi
    \qquad\fa \psi\in\coreGent.
  \end  {equation}
\end  {lem}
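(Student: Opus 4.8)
The plan is to verify the hypotheses of Nelson's time-reversal criterion \cite{Nel58}: if an operator generates a Markov process and is the $L^2(\pi)$-adjoint of the forward generator $G$, then the process it generates is a version of the stationary time reversal. Lemma~\ref{l:Gent_is_generator} already supplies the first ingredient, namely that the closure $\Gt$ of $D_\mt D_\St$ restricted to $\coreGent$ generates a Markov process; since $\Gt$ is by construction the closure of this restriction, $\coreGent$ is automatically a core and $\Gt\psi=D_\mt D_\St\psi=\tfrac12 v\psi''+\mut\psi'$ on $\coreGent$, which disposes of the last two assertions. Thus the whole task reduces to establishing the adjoint relation \eqref{eq:adjoint problem}, i.e.
\begin{equation*}
  \int_0^1 \psi\,G\phi\,\pi\,dp = \int_0^1 \phi\,\big(D_\mt D_\St\psi\big)\,\pi\,dp
\end{equation*}
for every $\psi\in\coreGent$ and every $\phi\in\C^2([0,1])$, the latter forming a core for $G$.

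To prove this I would integrate over $[\eps,1-\eps]$ and let $\eps\to0$, as in the proof of Lemma~\ref{l:equilibrium_distribution}. Splitting $G\phi$ into its diffusive part $A\phi$ and its jump part and using $w_0+w_1\equiv1$, the jump part contributes $\lambda\phi(0)\int\psi w_0\pi+\lambda\phi(1)\int\psi w_1\pi-\lambda\int\psi\phi\pi$. I would then integrate the diffusive contribution $\int \psi\,(A\phi)\,\pi$ by parts twice, transferring both derivatives onto $\psi$ and $\pi$. The resulting volume integrand is $(\tfrac12 v\pi\psi)''-(\mu\pi\psi)'$; expanding and invoking the ODE $(\tfrac12 v\pi)''-(\mu\pi)'=\lambda\pi$ from \eqref{eq:pi} rewrites it as $\pi\,\At\psi+\lambda\pi\psi$, where the coefficient of $\psi'$ collapses to $(v\pi)'-\mu\pi=\pi\mut$ exactly because $\mut=-\mu+(v\pi)'/\pi$ by \eqref{eq:A_tilde_general}. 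The spurious term $\lambda\pi\psi\phi$ then cancels the term $-\lambda\int\psi\phi\pi$ from the jump part, leaving the desired interior integral $\int\phi\,\At\psi\,\pi=\int\phi\,(D_\mt D_\St\psi)\,\pi$.

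The decisive step is the treatment of the boundary terms produced by the two integrations by parts. The term $[\tfrac12 v\pi\,\psi\,\phi']_0^1$ vanishes at each endpoint because $v\pi\to0$ there (a boundary condition in \eqref{eq:pi}) while $\psi,\phi'$ stay bounded. The remaining endpoint contributions collect into $[(\mu\pi-(\tfrac12 v\pi)')\psi\,\phi-\tfrac12 v\pi\psi'\,\phi]_0^1$; here the conditions $\lim_{p\to i}(\mu\pi-(\tfrac12 v\pi)')=(-1)^i\lambda\kappa_i$ from \eqref{eq:pi} and the defining conditions $\lim_{p\to i}(\tfrac12 v\pi\psi')=(-1)^{i+1}\int_0^1(\psi-\psi(i))\lambda w_i\pi\,dp$ of $\coreGent$, together with $\kappa_i=\int_0^1 w_i\pi\,dp$, conspire to match exactly the two jump terms $\lambda\phi(0)\int\psi w_0\pi+\lambda\phi(1)\int\psi w_1\pi$, so that all boundary contributions cancel. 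I expect this boundary bookkeeping to be the main obstacle, since at an accessible endpoint $\pi$ and $\psi'$ are individually singular (Lemma~\ref{l:boundary_behavior_pi}), and one must argue that each limit in $[\,\cdot\,]_0^1$ exists in $\R$ and that dominated convergence applies to the interior integrals as $\eps\to0$ — precisely what membership in $\coreGent$ guarantees.

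Finally, having the adjoint relation on the cores $\coreGent$ and $\C^2([0,1])$, I would extend it by density (after passing to $L^2(\pi)$, on which both Feller semigroups act as contractions because $\pi$ is invariant) to conclude that $\Gt$ coincides with the $L^2(\pi)$-adjoint $G^*$: the relation shows $\coreGent\subseteq\Dom(G^*)$ with $G^*=\Gt$ there, hence $\Gt\subseteq G^*$, and since both are generators of contraction Markov semigroups, maximality of generators forces $\Gt=G^*$. The semigroup generated by $\Gt$ is therefore the reversed semigroup, and Nelson's theorem yields a homogeneous Markov process $\rob{\pb_t\colon t\geq0}$ with generator $\Gt$ satisfying $\rob{\pb(t)\colon t\leq T}\eqd\rob{p(T-t)\colon t\leq T}$ for all $T\geq0$, which is the assertion of the lemma.
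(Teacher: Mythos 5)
Your proposal is correct and follows essentially the same route as the paper's proof: invoke Lemma~\ref{l:Gent_is_generator} to get a Markov generator whose core is $\coreGent$ by construction, verify the adjoint relation by integrating by parts on $[\eps,1-\eps]$, use the ODE and non-local boundary conditions from~\eqref{eq:pi} together with the defining condition of $\coreGent$ to cancel all boundary and jump contributions, and conclude via Nelson's theorem. Your closing paragraph on passing from adjointness on cores to adjointness of the semigroups in $L^2(\pi)$ is slightly more explicit than the paper, which simply cites \cite{Nel58} at that point, but it is the same argument.
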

\begin{proof}
  Let $\Gt$ be the closure of the operator
  defined in~\eqref{eq:Gent}.
  By Lemma~\ref{l:Gent_is_generator},
  $\Gt$ is the strong generator of a Markov
  process $\rob{\pb_t\colon t\geq0}$.
  Recall the generator $G$ of $p(\cdot)$ from~\eqref{eq:G_general}.
  We will prove that $\Gt$ is the adjoint operator of $G$
  with respect to the invariant measure $\pi(p)dp$.
  Let $0<\eps<\tfrac12$, $\phi\in\C^2\rub{[0,1]}$ and
  $\psi\in\coreGent$.
  The functions $v,\mu,\phi,\psi$ and $\pi$ are $\mathbf{C}^2$
  in $[\eps,1-\eps]$.
  Integration by parts yields
  \begin{equation}  \begin{split} \label{eq:int_by_parts2}
    \int_\eps^{1-\eps}&G\phi\mal\psi\mal\pi \,dp
       -\phi(0)\int_\eps^{1-\eps}\lambda w_0\mal\psi\mal\pi \,dp
       -\phi(1)\int_\eps^{1-\eps}\lambda w_1\mal\psi\mal\pi \,dp\\
     &=\int_\eps^{1-\eps}\phi^{''}\mal (\tfrac12 v\psi\pi) +\phi^{'}\mal(\mu\psi\pi)
       -\lambda\phi\psi\pi \,dp\\
     &=[\phi^{'}\tfrac12 v\psi\pi]_\eps^{1-\eps}
       + [\phi\mal\rob{\mu\psi\pi-(\tfrac12 v\psi\pi)^{'}}]_\eps^{1-\eps}\\
     &\qquad\qquad\qquad\;
          +\int_\eps^{1-\eps}\phi\mal\eckB{(\tfrac12 v\psi\pi)^{''}
          -(\mu\psi\pi)^{'} -\lambda\psi\pi}\,dp.
  \end  {split}     \end  {equation}
  As $\pi$ satisfies~\eqref{eq:pi}, we see that
  \begin{equation}
    (\tfrac12 v\psi\pi)^{''}-(\mu\psi\pi)^{'}-\lambda\psi\pi
    =\tfrac12 v\psi^{''}\pi+\rob{\frac{(v\pi)^{'}}{\pi}-\mu}\psi^{'}\pi
    =D_{\mt}D_{\St}\psi\mal \pi.
  \end  {equation}
  The functions $G \phi$, $\psi$, $w_0$, $w_1$, $\phi$ and 
  $D_{\mt}D_{\St}\psi$ are bounded and $\pi$ is integrable.
  Hence, we may apply the dominated convergence theorem to the
  integrals in~\eqref{eq:int_by_parts2} as $\eps\to0$.
  This also proves that the limits of the
  boundary terms in~\eqref{eq:int_by_parts2} exist as $\eps\to0$.
  Thus letting $\eps\to0$ in~\eqref{eq:int_by_parts2}, we obtain
  \begin{equation}  \begin{split} \label{eq:Gent_equals_Atranspose}
    \lefteqn{\int_0^1G\phi\mal\psi\pi\,dp
    -\int_0^1 \phi\mal D_{\mt}D_{\St}\psi\mal \pi\,dp}\\
    &=\phi(1)\robb{\psi(1)\rob{\mu\pi-(\tfrac12 v\pi)^{'}}(1-)
       -\lim_{p\to 1}\rob{\tfrac12 v\pi\psi^{'}}(p)+\int_0^1\lambda w_1\mal\psi\mal\pi\,dp}\\
    &\quad-\phi(0)\robb{\psi(0)\rob{\mu\pi-(\tfrac12 v\pi)^{'}}(0+)
       -\lim_{p\to 0}\rob{\tfrac12 v\pi\psi^{'}}(p)-\int_0^1\lambda w_0\mal\psi\mal\pi\,dp}\\
    &=0
  \end  {split}     \end  {equation}
  for all $\phi\in\mathbf{C}^2$ and $\psi\in\coreGent$.
  The last equality follows from~\eqref{eq:pi} and from $\psi\in\coreGent$.
  This proves that $G$ and $\Gt$ are adjoint to each other.
  Consequently, the semigroups of $\rob{p(t)\colon t\geq0}$ and of
  $\rob{\pb_t\colon t\geq0}$ are adjoint to each other.  According to~\cite{Nel58},
  this implies that the Markov process $\rob{\pb(t)\colon t\geq0}$ 
  associated with $\Gt$ has the same law as the time-reversed process
  of $\rob{p(t)\colon t\geq0}$.
  \qed
\end  {proof}

%
%
\noindent
\section{The local time process}%
\label{sec:The local time process}

This section describes some properties of the local time process of $\pt(\cdot)$.
First we show existence.
Recall the scale function $\St$ and the speed measure $\mt$
from~\eqref{eq:def:st_mt}.
%
%
\begin{lem}  \label{l:local_time_process}
  Assume~\ref{assumptions}.
  Then there exists a unique, non-negative process
  \begin{equation}
    \rob{\Lt_p(t)\colon t\geq0,\,p\in[0,1]}
  \end{equation}
  which is almost
  surely continuous in $(t,p)$ and which satisfies
  \begin{equation}  \label{eq:l:occupation_time_formula_Lt}
    \int_0^t f\rob{\pt(u)}\,du=\int_0^1 f(p)\Lt_p(t)\,\mt(dp)\qquad
    \fa t\geq0
  \end  {equation}
  for all measurable $f\colon[0,1]\to[0,\infty)$ almost surely.
  In addition, if $2\abs{\mu(i)}\geq\abs{v^{'}(i)}$ for $i\in\{0,1\}$,
  then $\Lt_i(\cdot)\equiv 0$ almost surely.
\end  {lem}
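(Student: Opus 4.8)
The plan is to realise $\Lt$ as a weighted semimartingale local time of the diffusive motion, pieced together across the jumps of $\pt(\cdot)$. First I would pass to natural scale by setting $Y(t):=\St(\pt(t))$, so that on each interval between consecutive jumps $Y$ is a continuous local martingale (a time-changed Brownian motion) with $d\langle Y\rangle_t=\St^{'}(\pt(t))^2 v(\pt(t))\,dt$. Because the forward process has only finitely many jumps on $[0,T]$ (they form a Poisson process of rate $\lambda$), so does its time reversal $\pt(\cdot)$, which by Lemma~\ref{l:generator_backward_process} evolves according to $\At$ between jumps. On each inter-jump interval $Y$ is thus a continuous semimartingale, and by the standard theory of semimartingale local times (Trotter's theorem) the family $(a,t)\mapsto L^a_Y(t)$ admits a version jointly continuous in $(a,t)$ which, for fixed $t$, is supported on the compact range $[\inf_{s\le t}Y(s),\sup_{s\le t}Y(s)]$.

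Next I would set $\Lt_p(t):=L^{\St(p)}_Y(t)$ and verify~\eqref{eq:occupation_time_formula_Lt}. Applying the occupation times formula for $Y$ to $g(a):=f(\St^{-1}(a))\St^{'}(\St^{-1}(a))^{-2}v(\St^{-1}(a))^{-1}$, substituting $a=\St(p)$ (so that $\St^{'}(p)^{-1}v(p)^{-1}\,dp=\mt(dp)$), and summing over the finitely many inter-jump intervals (the jump instants being Lebesgue-null) yields~\eqref{eq:occupation_time_formula_Lt}; this substitution also exhibits the scalar factor relating $\Lt$ to the genuine semimartingale local time. Joint continuity of $L^a_Y$ together with continuity and monotonicity of $\St$ gives joint continuity of $(t,p)\mapsto\Lt_p(t)$ on $[0,1]$, where at a boundary $i$ inaccessible to $\At$ (that is $\St(i)=\pm\infty$, Lemma~\ref{l:when_accessible}) one has $\Lt_i(t)=\lim_{p\to i}L^{\St(p)}_Y(t)=0$ because $L^a_Y(t)$ vanishes once $a$ passes the running maximum or minimum of $Y$. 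Uniqueness is immediate: two jointly continuous families satisfying~\eqref{eq:occupation_time_formula_Lt} are continuous densities, with respect to $\mt$, of the same occupation measure, and $\mt$ has full support on $[0,1]$, so they coincide.

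For the final assertion I would split according to accessibility. If $\lambda w_i\equiv0$ and $2\abs{\mu(i)}\ge\abs{v^{'}(i)}$, then $i$ is inaccessible to $\At$ by Lemma~\ref{l:when_accessible}, and $\Lt_i\equiv0$ was already obtained above. The substantive case is $\lambda w_i\not\equiv0$ with $2\abs{\mu(i)}\ge\abs{v^{'}(i)}$, where $i$ is accessible to $\At$ and yet accumulation of local time must be excluded. Here I would use stationarity: taking expectations in~\eqref{eq:occupation_time_formula_Lt} for the stationary process and using $\mt(dp)=\mt^{'}(p)\,dp$ with $\mt^{'}>0$ on $(0,1)$ gives $\Expectation[\Lt_p(t)]=t\,\pi(p)v(p)\St^{'}(p)$ for $p\in(0,1)$. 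By the computation underlying Lemma~\ref{l:rate} one has $\pi(p)v(p)\St^{'}(p)=\lambda\kappa_i\roB{\tfrac{\mt(p)}{\pi(p)}\lambda\kappa_i}^{-1}\to\lambda\kappa_i/r_i$ as $p\to i$, which equals $0$ since $r_i=\infty$ in this regime (Lemma~\ref{l:rate}). Fatou's lemma applied to $\Lt_i(t)=\lim_{p\to i}\Lt_p(t)\ge0$ then forces $\Expectation[\Lt_i(t)]=0$, hence $\Lt_i(t)=0$ almost surely for each $t$, and by continuity $\Lt_i(\cdot)\equiv0$.

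The main obstacle is precisely this accessible case: the diffusive motion genuinely reaches $i$, so the vanishing cannot come from inaccessibility, nor (in the critical and intermediate ranges $\abs{v^{'}(i)}/2\le\abs{\mu(i)}<\abs{v^{'}(i)}$) from $\mt$ carrying infinite mass near $i$, since there $\mt$ is finite by~\eqref{eq:asymptotic_behavior_mt} and $i$ is in fact a regular boundary for $\At$ at which a reflecting version would accumulate positive local time. What rescues the statement is that the backward boundary condition carries no reflecting component in this regime (equivalently $r_i=\infty$, so $\pt(\cdot)$ leaves $i$ without delay), and the stationary identity $\Expectation[\Lt_p(t)]=t\,\pi(p)v(p)\St^{'}(p)\to t\lambda\kappa_i/r_i$ is its clean quantitative shadow. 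I would be careful to invoke the interchange of limit and expectation only in the Fatou direction, since at an inaccessible boundary the same expectation has a strictly positive limit while $\Lt_i\equiv0$, a genuine failure of uniform integrability that must not be mishandled.
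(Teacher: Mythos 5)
Your construction of $\Lt$ is essentially the paper's: pass to natural scale on each inter-jump segment, take the (bi-continuous) local time of the resulting local martingale, rescale by the speed density via the occupation-times formula, and concatenate across the finitely many jumps; uniqueness from continuity of densities with respect to $\mt$. That part, including the treatment of boundaries inaccessible to $\At$, is sound and matches the paper's proof.

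The gap is in your ``substantive case'' $\lambda w_i\not\equiv 0$, $2\abs{\mu(i)}\ge\abs{v^{'}(i)}$. The identity $\Expectation\eckb{\Lt_p(t)}=t\,\pi(p)v(p)\St^{'}(p)$ is obtained by taking expectations in \eqref{eq:l:occupation_time_formula_Lt} under the assumption that $\pi(p)\,dp$ is invariant for $\pt(\cdot)$. But at this stage nothing of the sort is known about $\pt(\cdot)$: that $\pt(\cdot)$ is the stationary time reversal of $p(\cdot)$ (hence stationary with density $\pi$) is exactly Theorem~\ref{thm:main_result}, whose proof in Section~\ref{sec:the_backward_process} uses the present lemma. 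So the argument is circular. A secondary defect: even granting stationarity, Fatou gives $\Lt_i(t)=0$ only $\P^{\pi}$-a.s., whereas the lemma is invoked later for the process started \emph{at} the boundary point $i$, which is a $\pi$-null set. The correct argument is the one you relegate to a parenthesis: in this regime Lemma~\ref{l:rate} gives $r_i=\infty$, hence $R_i=0$ by convention, so the jump condition $\Lt_i(t)\ge R_i$ is already satisfied at the first instant the diffusive motion reaches $i$ (where its local time at $i$ still vanishes, since local time at a level is zero at the first hitting time of that level and increases only on the set of times the path spends at that level). The process therefore jumps into $(0,1)$ the moment it arrives at $i$, never occupies $i$ on a set where local time could accrue, and $\Lt_i(\cdot)\equiv0$ for every starting point; combined with your inaccessibility argument for $\lambda w_i\equiv 0$ this yields the final assertion without any appeal to stationarity.
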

\begin{proof}
  Let $0=:\tau_0\leq\tau_1<\tau_2<\cdots$ be the jump times of
  $\pt(\cdot)$.
  Then, by construction of $\pt(\cdot)$,
  $\rob{\pt(t+\tau_{n-1})\colon 0\leq t<\tau_n-\tau_{n-1}}$,
  $n\in\Nat_{\geq1}$, are independent diffusions governed by $\At$.
  It is well-known that $\pt(\cdot+\tau_{n-1})$
  can be written in terms of a Brownian motion as follows.
  Let $\curlb{B^{y,n}_{\cdot}\colon y\in\R, n\in\Nat}$
  be a family of independent standard Brownian motions with
  $B^{y,n}_{0}=y$.
  Denote by $\rob{L_x^{B^{y,n}}(t)\colon t\geq0, x\in\R}$ the
  local time process of $B^{y,n}_{\cdot}$, see e.g.\ Section 2.8
  in~\cite{ItoMcKean}, and define
  \begin{equation}
    \xi_t^{(n)}:=\int_0^1 L_{\St(p)}^{B^{\St(\pt(\tau_{n-1})),n}}(t)\,\mt(dp)
    \qquad t\geq0.
  \end{equation}
  Then a version of $\pt(\cdot+\tau_{n-1})$ is given by
  \begin{equation}
    \pt(t+\tau_{n-1})=\St^{-1}\robb{
       B_{\rob{\xi_t^{(n)}}^{-1}}^{\St\rob{\pt(\tau_{n-1})},n}
       }\quad 0\leq t<\tau_n-\tau_{n-1}.
  \end{equation}
  Inserting this into the occupation time formula of the Brownian motion,
  a short calculation (see e.g.\ Section 5.4 in~\cite{ItoMcKean}) shows that
  \begin{equation} \label{eq:occupation_time_Y}
    \int_0^t f\rob{\pt(r+\tau_{n-1})}\,dr
    =\int_0^1 f(p)\Lh_p^{ (n) }(t)\mt(dp)\qquad 0\leq t<\tau_n-\tau_{n-1},
  \end{equation}
  where the local time process of $\pt(\cdot+\tau_{n-1})$ with respect
  to the speed measure is
  \begin{equation}
    \Lh_p^{ (n) }(t)
    :=L_{\St(p)}^{B^{\St(\pt(\tau_{n-1})),n}}\roB{(\xi_t^{(n)})^{-1}}
    \qquad0\leq t<\tau_n-\tau_{n-1},\,p\in[0,1], n\in\Nat_{\geq1}.
  \end{equation}
  Now we put the independent path segments together by defining
  \begin{equation}   \label{eq:def:Lt}
    \Lt_p(t)
    :=\Lh_p^{ (n) }\rob{t-\tau_{n-1}}
    +\sum_{k=1}^{n-1} \Lh_p^{ (k) }\rob{ \tau_k-\tau_{k-1} }
    \qquad
    \text{if }\tau_{n-1}\leq t<\tau_n.
  \end{equation}
  It is easy to use~\eqref{eq:occupation_time_Y} to show that 
  $\rob{\Lt_p(t)\colon t\geq0,p\in[0,1]}$ satisfies~\eqref{eq:l:occupation_time_formula_Lt}.
  Uniqueness follows from standard arguments.

  If $2\abs{\mu(i)}\geq\abs{v^{'}(i)}$, then $\pt(\cdot)$ jumps into $(0,1)$ as soon 
  as it hits the boundary and we conclude that $\pt(t)\neq i$ for all $t\geq0$.
  Thus the local time at this boundary point is identically zero.
  \qed
\end{proof}

In the next section, we will need to be able control the second moment of the local time
of the time-reversed jump-diffusion at a boundary point.  We first prove the following
estimate concerning the local time of a standard Brownian motion.

%
%
\begin{lem}  \label{l:bound_on_LBsquare}
  Let $\eps,\dl>0$ and let $\rob{B_t\colon t\geq0}$ be a standard
  Brownian motion with local time $L_x^B(\cdot)$ at $x\in\R$.
  Suppose that the function $\Sb\colon[0,\delta]\to\R$ is 
  non-decreasing. Define $\Delta:=\Sb(\delta)-\Sb(0)$ and 
  \begin{equation}
    \zeta_t:=\int_{0}^\dl\frac{1}{\eps} L_{\Sb(y)}^B (t)\,dy\quad t\geq0.
  \end  {equation}
  Then, for each $m>0$, there exists a constant $C_m$ independent of 
  $\eps$ and of $\dl$ such that
  \begin{equation}  \label{eq:bound_on_LBsquare}
    \Expectation^{\Sb(p)} \eckbb{\roB{L_{\Sb(p)}^B \rob{\zeta^{-1}_t}
             }^m}
    \leq C_m\ruB{\eps t}^{\frac{m}{2}}
          \eckB{\ruB{\frac{\eps t}{\Delta^2}}^{\frac{m}{2} }
                +1
               }
  \end  {equation}
  for all $p\in[0,\delta]$ and $t\geq 0$.
\end  {lem}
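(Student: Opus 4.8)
The plan is to turn the moment bound into a lower‑tail estimate for the random clock $\zeta$ evaluated at an inverse local time of $B$, and then to describe that clock explicitly by means of the second Ray–Knight theorem. First I would dispose of the degenerate case $\Delta=0$, in which the right‑hand side of~\eqref{eq:bound_on_LBsquare} is infinite and there is nothing to prove, so assume $\Delta>0$; after a translation we may take $\Sb(0)=0$, and a preliminary Brownian scaling (space by $\Delta$, time by $\Delta^2$) reduces the spatial window to unit width and clarifies the role of $\Delta$. Writing $x_0:=\Sb(p)$, let $\tau_\lambda:=\inf\{s\ge0\colon L_{x_0}^B(s)>\lambda\}$ be the right‑continuous inverse of the continuous non‑decreasing map $s\mapsto L_{x_0}^B(s)$. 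Since $\zeta$ is continuous and non‑decreasing, one has the pathwise equivalence $\{L_{x_0}^B(\zeta_t^{-1})>\lambda\}=\{\zeta_{\tau_\lambda}<t\}$ (because $\zeta_t^{-1}>\tau_\lambda$ iff $\zeta_{\tau_\lambda}<t$), and therefore
\[
  \Expectation^{x_0}\eckB{\rob{L_{x_0}^B(\zeta_t^{-1})}^m}
  =\int_0^\infty m\lambda^{m-1}\,\P^{x_0}\rob{\zeta_{\tau_\lambda}<t}\,d\lambda .
\]
The whole problem is thereby reduced to estimating the lower tail $\P^{x_0}(\zeta_{\tau_\lambda}<t)$ uniformly in $\eps$, $\dl$ and $\Sb$.

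The second step is to identify the law of $\zeta_{\tau_\lambda}$. By the second Ray–Knight theorem, at the inverse local time $\tau_\lambda$ the two spatial local‑time profiles $r\mapsto L_{x_0\pm r}^B(\tau_\lambda)$, $r\ge0$, are independent squared Bessel processes of dimension $0$ started at $\lambda$. Denoting by $Q$ the resulting two‑sided $\mathrm{BESQ}^0(\lambda)$ profile, the definition of $\zeta$ yields
\[
  \zeta_{\tau_\lambda}=\frac1\eps\int_0^\dl L_{\Sb(y)}^B(\tau_\lambda)\,dy
  =\frac1\eps\int_0^\dl Q_{\abs{\Sb(y)-x_0}}\,dy .
\]
Because $\mathrm{BESQ}^0$ is a non‑negative martingale, $\Expectation[Q_r]=\lambda$ for all $r$, so $\Expectation[\zeta_{\tau_\lambda}]=\lambda\dl/\eps$, while the displacement estimate $\Expectation[(Q_r-\lambda)^2]=4\lambda r$ records how little the profile has moved on spatial scales small compared with $\lambda$. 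The scale covariance of $\mathrm{BESQ}^0$ under $(\lambda,r)\mapsto(c\lambda,cr)$ is exactly what makes the final constant independent of $\eps$ and $\dl$.

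The third and central step is the lower‑tail bound, and it is here that the two terms of~\eqref{eq:bound_on_LBsquare} arise from two regimes according to the size of $\lambda$ relative to the window width $\Delta$ (all displacements $\abs{\Sb(y)-x_0}$ being at most $\Delta$). When $\lambda\gtrsim\Delta$, the bound $\Expectation[(Q_r-\lambda)^2]=4\lambda r$ with $r\le\Delta$ forces $Q$ to stay comparable to $\lambda$ across the whole window, so $\zeta_{\tau_\lambda}$ concentrates about its mean $\lambda\dl/\eps$; a Chebyshev second‑moment estimate then bounds $\P(\zeta_{\tau_\lambda}<t)$ and, after integrating in $\lambda$, produces the ``long‑clock'' contribution of order $(\eps t)^{m/2}(\eps t/\Delta^2)^{m/2}$. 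When $\lambda\lesssim\Delta$ the Bessel profile may reach $0$ inside the window, and the relevant moments are controlled by the recursion $\tfrac{d}{dr}\Expectation[Q_r^k]=2k(k-1)\Expectation[Q_r^{k-1}]$, which gives $\Expectation[Q_r^k]$ of order $\lambda^k+\lambda\,r^{k-1}$; inserting these and integrating yields the ``short‑clock'' contribution of order $(\eps t)^{m/2}$. Adding the two contributions gives~\eqref{eq:bound_on_LBsquare} with $C_m$ depending only on $m$.

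I expect the main obstacle to be precisely this uniform lower‑tail control of $\zeta_{\tau_\lambda}=\eps^{-1}\int_0^\dl Q_{\abs{\Sb(y)-x_0}}\,dy$: since $\Sb$ is only assumed monotone, the push‑forward of Lebesgue measure under $\Sb$ can place its mass anywhere in the window, so one cannot simply restrict the integral to a neighbourhood of $x_0$. The decisive inputs are therefore the martingale property (giving the mean $\lambda\dl/\eps$, hence the $\dl$‑dependence that ultimately appears through $\Delta$ after scaling) together with the Bessel displacement and moment estimates (giving the width‑$\Delta$ decorrelation scale), and the delicate point is to combine them so that the resulting constant is genuinely free of $\eps$, $\dl$ and of the particular $\Sb$. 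The final integration in $\lambda$ against $m\lambda^{m-1}\,d\lambda$ is then routine and valid for every real $m>0$.
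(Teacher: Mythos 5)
Your reduction of the $m$-th moment to the tail integral $\int_0^\infty m\lambda^{m-1}\,\P^{x_0}\rob{\zeta_{\tau_\lambda}<t}\,d\lambda$ and the identification of $\zeta_{\tau_\lambda}=\eps^{-1}\int_0^\dl Q_{\abs{\Sb(y)-x_0}}\,dy$ via the second Ray--Knight theorem are correct, and this is a genuinely different route from the paper's. The paper instead time-changes $B$ by $\zeta^{-1}$, identifies $L^B_{\Sb(p)}\rob{\zeta_t^{-1}}$ with the local time of a Brownian motion folded into the window $[\Sb(0),\Sb(\dl)]$, i.e.\ with the sum $\sum_{z\in\Z}\rob{L^B_{2\Delta z}(\eps t)+L^B_{2\Delta z+2\Sb(\dl)}(\eps t)}$, observes that only about $\abs{B}^*_{\eps t}/\Delta$ summands are non-zero, and finishes with convexity, the strong Markov property, L\'evy's identity $2L_0^B(t)\eqd B^*_t$ and the Burkholder--Davis--Gundy inequality; the two factors in~\eqref{eq:bound_on_LBsquare} are precisely $\Expectation\eckb{(\abs{B}^*_{\eps t}/\Delta+1)^m}$ and $\Expectation\eckb{(L_0^B(\eps t))^m}$. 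Your approach, by contrast, must extract everything from the lower tail of an additive functional of a $\mathrm{BESQ}^0$ process.

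That is where the proposal has a genuine gap: the Chebyshev second-moment estimate in your ``long-clock'' regime is quantitatively too weak. Since $\Expectation[Q_r]=\lambda$ and $\Var(Q_r)=4\lambda r\leq 4\lambda\Delta$, the best a second-moment argument can give is $\P\rob{\zeta_{\tau_\lambda}<t}=O(\Delta/\lambda)$ once $\lambda$ exceeds the relevant threshold. A tail decaying like $1/\lambda$ makes $\int^\infty m\lambda^{m-1}\,\P\rob{\zeta_{\tau_\lambda}<t}\,d\lambda$ divergent for every $m\geq1$ --- in particular for $m=2$, the only case the paper actually needs in Lemma~\ref{l:Lt_is_o_of_sqrt_t}. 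One needs decay faster than $\lambda^{-m}$ for arbitrary $m$, i.e.\ an exponential lower-tail bound, which is available (for instance from the explicit Laplace transform of $\int_0^r Q_s\,ds$ for $\mathrm{BESQ}^0(\lambda)$ combined with $\P(X<u)\leq e^{\theta u}\Expectation[e^{-\theta X}]$) but is a substantially different estimate from the one you propose; the moment recursion for $\Expectation[Q_r^k]$ yields upper bounds on moments and cannot produce a lower-tail bound either. Note also that the correct dichotomy is $\lambda$ against $\max\rob{\sqrt{\eps t},\,\eps t/\Delta}$ (below which one simply bounds the probability by $1$), not $\lambda$ against $\Delta$. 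Finally, the obstacle you flag yourself --- that $\Sb_*\!\Leb$ may charge only points at distance of order $\Delta$ from $x_0$, where the $\mathrm{BESQ}^0$ profile may already be absorbed at $0$ --- is real and is not resolved by your two-regime split; the paper's folding argument never localizes around $x_0$ and so does not meet this difficulty.
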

\begin{proof}
  Inequality~\eqref{eq:bound_on_LBsquare} is trivial if $\Delta=0$
  or $t=0$, so we may and will assume that $\Delta>0$, $t>0$.
  Fix $p\in[0,\delta]$.
  The left-hand side of~\eqref{eq:bound_on_LBsquare} does not depend
  on the value of $\Sb(p)$, so we will also assume w.l.o.g.\ that $\Sb(p)=0$.
  Denote by $B^*_t:=\max\{B_s\colon s\leq t\}$ and by 
  $\abs{B}^*_t:=\max\{\abs{B_s}\colon s\leq t\}$ the process of the maximum 
  and the process of the absolute maximum, respectively.
  Define  $Z_t:= \Sb^{-1}\rob{B_{\zeta^{-1}_t}}$ for $t\geq0$. According to
  Section~5.4 in~\cite{ItoMcKean}, $L_p^Z(t):=L_{0}^B\rob{\zeta^{-1}_t}$
  is the local time of $\rob{Z_t\colon t\geq0}$ at $p$.
  The process $\rob{Z_t\colon t\geq0}$ is equal in distribution to the process
  $\rob{\Sb^{-1}\rob{B_{\eps t}}\colon t\geq0}$
  reflected at $0$ and at $\dl$.
  Another way to construct $\rob{\Sb(Z_t)\colon t\geq0}$ is to take the
  path of $\rob{B_{\eps t}\colon t\geq0}$
  and to identify each $x\in[\Sb(0),\Sb(\dl)]$
  with the set $\{x+2\Delta z,2\Sb(\dl)-x+2\Delta z\colon z\in\Z\}$.
  Thus the local time $L_p^Z(t)$ of $\rob{Z_t\colon t\geq0}$ in $p$
  is equal in distribution to the sum of
  $L_{2\Delta z}^B\rob{\eps t} + L_{2\Delta z+2\Sb(\dl)}^B\rob{\eps t}$
  over $z\in\Z$.  Note that $L_{x}^B\rob{\eps t}=0$ almost surely
  on the event $\{\abs{B}_{\eps t}^*<\abs{x}\}$, $x\in\R$.
  In addition, note that convexity of $0\leq x\mapsto x^m$
  implies $k^{m-1}(a_1+\cdots+a_k)^m\leq (a_1^m+\cdots+a_k^m)$
  for $a_1,\ldots,a_k\geq0$, $k\in\Nat$.  Therefore,
  \begin{equation}  \begin{split}  \label{eq:estimate_LZ}
    \lefteqn{\Expectation^p\eckB{\rub{L_p^Z(t)}^m}}\\
    &=\Expectation^{0}
      \eckbb{
           \sum_{k\in 2\Delta\Nat_{\geq0}}
           \!\!\!\!\1_{[k,k+2\Delta)}\rob{\abs{B}_{\eps t}^*}
           \biggl(\sum_{z=-\frac{k}{2\Delta}}^{\frac{k}{2\Delta}}
                 \sum_{x\in\{0,2 \Sb(\dl)\}}
             L_{2\Delta z+x}^B\rob{\eps t}\biggr)^m
      }\\
    &\leq\Expectation^{0}
      \eckbb{
           \sum_{k\in 2\Delta\Nat_{\geq0}}
           \!\!\!\!\1_{[k,k+2\Delta)}\rob{\abs{B}_{\eps t}^*}
           \mal\ruB{\frac{2k}{\Delta}+2}^{m-1}
           \!\!\!\!\sum_{z=-\frac{k}{2\Delta}}^{\frac{k}{2\Delta}}
           \sum_{x\in\{0,2 \Sb(\dl)\}}
             \!\!\ruB{L_{2\Delta z+x}^B\rob{\eps t}}^m
      }.
  \end  {split}     \end  {equation}
  Use the strong Markov property (e.g.\ Proposition 2.6.17
  in~\cite{KaSh}) to restart the Brownian motion at the
  first hitting time
  of ${2\Delta z}$ and
  of ${2\Delta z+2\Sb(\dl)}$, respectively.
  Thus the left-hand side of~\eqref{eq:estimate_LZ} is bounded above by
  \begin{equation}  \begin{split}  \label{eq:asdf}
    &\sum_{k\in 2\Delta\Nat_{\geq0}}
           \P^{0}\ruB{\abs{B}_{\eps t}^*\in [k,k+2\Delta)}
           \mal\ruB{\frac{2k}{\Delta}+2}^{m-1}
           2\sum_{i=-\frac{k}{2\Delta}}^{\frac{k}{2\Delta}}
             \Expectation^{0}\eckB{\ruB{L_0^B\ro{\eps t}}^m}\\
    &\leq
        \Expectation^{0}
            \eckbb{
            \Bigl(\frac{2\abs{B}_{\eps t}^*}{\Delta}+2\Bigr)^{m}
            }
        \Expectation^{0}\eckB{\roB{L_0^B\ro{\eps t}}^m}.
  \end{split}     \end{equation}
  Note that 
  $2 L_0^B(t)$ and $B_t^*$ are equal in distribution,
  see e.g.\ Theorem 3.6.17 in~\cite{KaSh}.
  Therefore the left-hand side of~\eqref{eq:estimate_LZ} is bounded above by
  \begin{equation}  \begin{split}
    &2^{m}
            \eckbb{
            \Expectation^{0} \eckB{\Bigl(\frac{\abs{B}_{\eps t}^*}{\Delta}\Bigr)^{m}}
            +1
            }
      \Expectation^{0}\eckB{\roB{\frac{1}{2}B_{\eps t}^*}^m}
        \\
    &\leq
        \eckB{K_{\frac{m}{2}}\ruB{\frac{\eps t}{\Delta^2}}^{\frac{m}{2}}
              +1}
        K_{\frac{m}{2}}\ruB{\eps t}^{\frac{m}{2}}
  \end  {split}     \end  {equation}
  where $K_{m/2}\geq1$ is a suitable constant which is independent
  of $\Delta, \eps$ and $t$.
  The last step follows from the
  Burk\-hol\-der-Da\-vis-Gun\-dy inequality,
  see e.g.\ Theorem~3.3.28 in~\cite{KaSh}.
  Therefore \eqref{eq:bound_on_LBsquare} holds with
  $C_m:=K_{\frac{m}{2}}^2$.
  \qed
\end  {proof}

In the proof of Theorem~\ref{thm:main_result}, we will need to exploit the
fact that, in the $L^2$ sense, the local time $\rob{\Lt_i(t)\colon t\geq0}$ at a 
boundary point $i\in\{0,1\}$ of the backwards process started at $i$ decreases 
to zero faster than $\sqrt{t}$ as $t\to0$.  This might be surprising as one can 
show that
\begin{equation}
  \Expectation^0\eckB{\rob{L_0^B(t)}^2}\sim t\qqastO.
\end{equation}
However, the infinitesimal variance $v(\cdot)$ is zero in $i$.
Thus, informally speaking, the diffusion governed by $\At$
is pushed away from zero almost deterministically at rate $\mut(i)>0$
if the boundary point is accessible at all.
%
%
\begin{lem}    \label{l:Lt_is_o_of_sqrt_t}
  Assume~\ref{assumptions}.
  Then the local time at the boundary satisfies
  \begin{equation}   \label{eq:Lt_is_o_of_sqrt_t}
    \lim_{t\to0}\frac1t \Expectation^i \eckbb{\roB{\Lt_i(t)}^2}=0
  \end  {equation}
  for $i\in\{0,1\}$.
\end  {lem}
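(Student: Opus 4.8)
The plan is to treat $i=0$ (the case $i=1$ being symmetric), to reduce to an accessible boundary, and then to estimate $\Expectation^0[(\Lt_0(t))^2]$ by comparing the first diffusive segment of $\pt(\cdot)$ with a reflected Brownian motion and invoking Lemma~\ref{l:bound_on_LBsquare}. If $2\abs{\mu(0)}\geq\abs{v^{'}(0)}$, then $\Lt_0(\cdot)\equiv0$ by Lemma~\ref{l:local_time_process} and there is nothing to prove, so I assume $2\abs{\mu(0)}<\abs{v^{'}(0)}$. In this case $\St(0)$ is finite and, by~\eqref{eq:asymptotic_behavior_mt}, the speed density satisfies $\mt(p)\sim p^{(2\mu(0)-v^{'}(0))/v^{'}(0)}$ as $p\to0$ with a negative exponent, so $\mt$ is continuous, positive, and (for $\delta$ small) decreasing on $(0,\delta]$ with $\mt(p)\to\infty$. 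Fix such a $\delta\in(0,\tfrac12)$, put $\tfrac1\eps:=\inf_{(0,\delta]}\mt=\mt(\delta)$ and $\Delta:=\St(\delta)-\St(0)$, and note $\eps=\eps(\delta)\to0$ as $\delta\to0$.

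Let $\tau_1$ be the first jump time of $\pt(\cdot)$ started at $0$ and let $\ell(t):=L^B_{\St(0)}((\xi^{(1)})^{-1}(t))$ be the local time at $0$ of the first diffusive segment in the representation of Lemma~\ref{l:local_time_process}, so that $\Lt_0(t)=\ell(t)$ on $\{\tau_1>t\}$. Since $\mt(dp)\geq\tfrac1\eps\,dp$ on $(0,\delta]$, the additive functional $\xi^{(1)}$ dominates the functional $\zeta$ of Lemma~\ref{l:bound_on_LBsquare} built from $\Sb:=\St|_{[0,\delta]}$ and this $\eps$; hence $(\xi^{(1)})^{-1}(t)\leq\zeta^{-1}(t)$, so $\ell(t)\leq L^B_{\St(0)}(\zeta^{-1}(t))$, and Lemma~\ref{l:bound_on_LBsquare} with $m=2$ gives $\Expectation^0[\ell(t)^2]\leq C_2\,\eps t\,[\eps t/\Delta^2+1]$.

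Next I split according to $\{\tau_1>t\}$ and $\{\tau_1\leq t\}$. On the first event $\Lt_0(t)=\ell(t)$, so that piece is at most $\Expectation^0[\ell(t)^2]$. On $\{\tau_1\leq t\}$ write $\Lt_0(t)=\Lt_0(\tau_1)+(\Lt_0(t)-\Lt_0(\tau_1))$ with $\Lt_0(\tau_1)=\ell(\tau_1)\leq\ell(t)$; the strong Markov property at $\tau_1$ identifies the increment with the local time at $0$ over the remaining time $t-\tau_1$ of the process restarted at the interior jump target $q_1\in(0,1)$, and a hitting-time comparison gives $\Expectation^{q}[(\Lt_0(u))^2]\leq\Expectation^0[(\Lt_0(u))^2]=:h(u)$ with $h$ nondecreasing. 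Using $(a+b)^2\leq2a^2+2b^2$ and collecting terms, I obtain the renewal-type inequality
\[
  h(t)\leq 3\,C_2\,\eps t\,\roB{\tfrac{\eps t}{\Delta^2}+1}+2\,h(t)\,\P\rob{\tau_1\leq t}.
\]

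Finally I run a two-stage limit. Because $\P(\tau_1\leq t)\to0$ as $t\to0$ (either $R_0>0$ almost surely when $\lambda w_0\not\equiv0$, or $\tau_1=\infty$ when $\lambda w_0\equiv0$), absorbing the last term for small $t$ first yields the crude bound $h(t)=O(t)$, in particular $h(t)<\infty$. Feeding $h(t)/t\leq M$ back in and letting $t\to0$ with $\delta$ fixed makes the term $2h(t)\P(\tau_1\leq t)/t$ vanish, leaving $\limsup_{t\to0}h(t)/t\leq 3C_2\,\eps(\delta)$. Since $\eps(\delta)\to0$ as $\delta\to0$, letting $\delta\to0$ gives $\lim_{t\to0}h(t)/t=0$, which is~\eqref{eq:Lt_is_o_of_sqrt_t}. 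The main obstacle is exactly this last point: a single application of Lemma~\ref{l:bound_on_LBsquare} only produces the order-$t$ bound $C_2\eps t$, and the upgrade to $o(t)$ relies on the freedom to shrink the neighbourhood $\delta$ (so that $\eps(\delta)\to0$) after taking $t\to0$; the supporting work is verifying that the post-jump contribution is controlled by $h(t)\P(\tau_1\leq t)$ so that it is negligible once $h(t)=O(t)$ is known.
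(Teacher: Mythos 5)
Your proposal is correct and follows essentially the same route as the paper: reduce to the case $2\abs{\mu(i)}<\abs{v^{'}(i)}$, bound the pre-jump local time by the Brownian comparison of Lemma~\ref{l:bound_on_LBsquare} with $\eps=1/\inf_{(0,\delta]}\mt\to0$ as $\delta\to0$, and absorb the post-jump contribution through a self-referencing inequality that becomes negligible because $\P(\tau_1\leq t)\to0$. The paper's bookkeeping differs only cosmetically: it fixes $t_0$ with $r_0\,\Expectation^0\eckb{\Lh_0^{(1)}(t)}\leq\tfrac14$ to conclude $\Expectation^0\eckb{(\Lt_0(t))^2}\leq 6\,\Expectation^0\eckb{(\Lh_0^{(1)}(t))^2}$, rather than your inequality $h(t)\leq 3C_2\eps t(\eps t/\Delta^2+1)+2h(t)\P(\tau_1\leq t)$, and like yours it implicitly uses the hitting-time domination $\Expectation^{q}\eckb{(\Lt_0(u))^2}\leq\Expectation^{0}\eckb{(\Lt_0(u))^2}$ for the restarted process.
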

\begin{proof}
  If $2\abs{\mu(i)}\geq\abs{v^{'}(i)}$, then Lemma~\ref{l:local_time_process}
  tells us that $\Lt_i(t)=0$, which implies the assertion in this case.
  For the rest of the proof assume that $2\abs{\mu(i)}<\abs{v^{'}(i)}$.
  W.l.o.g.\ we assume that $i=0$ as the case $i=1$ is similar.
  To begin, we prove that~\eqref{eq:Lt_is_o_of_sqrt_t}
  holds with $\Lt_0(\cdot)$ replaced by $\Lh_0^{(1)}(\cdot)$.
  For $\dl\in(0,1)$ define
  \begin{equation}
    \uline{\mt}(\dl)=\inf_{x\leq \dl}\mt(x).
  \end  {equation}
  The asymptotic behavior~\eqref{eq:asymptotic_behavior_mt}
  of $\mt(\cdot)$ implies $\lim_{\dl\to0}\uline{\mt}(\dl)=\lim_{p\to 0}\mt(p)=\infty$.
  Recall that $B^{y,1}_\cdot$ is a standard Brownian motion started
  at $B_0^{y,1}=y$. Observe that
  \begin{equation}
    \xi_t^{(1)}:=
    \int_0^1 L_{\St(y)}^{B^{\St(0),1}}(t) \mt(p)\,dp
    \geq\uline{\mt}(\dl)\int_0^\dl L_{\St(y)}^{B^{\St(0),1}}(t)\,dp
    =:\zeta_t\quad\fa t\geq0.
  \end  {equation}
  Using $\rob{\xi_t^{(1)}}^{-1}\leq\zeta^{-1}$ we obtain
  an upper bound for $\Lh_0^{(1)}(\cdot)$ as follows
  \begin{equation}  \begin{split}  \label{eq:LB_tau_LB_sigma}
    \lefteqn{
    \frac1t\Expectation^0\eckB{\rub{\Lh_0^{(1)}(t)}^2}
    =\frac1t\Expectation\eckB{
       \ruB{L_{\St(0)}^{B^{{\St(0)},1}} \roB{\rob{\xi_t^{(1)}}^{-1}}
           }^2                   }
    }\\
    &\leq\frac1t\Expectation\eckB{
       \ruB{L_{\St(0)}^{B^{{\St(0)},1}} \rob{\zeta_t^{-1}}
           }^2                   }
    \leq\frac{C_2}{\uline{\mt}(\dl)}
         \eckbb{\frac{t}{\uline{\mt}(\dl)\rob{\St(\delta)-\St(0)}^2}
                +1}\\
    &\lratO \frac{C_2}{\uline{\mt}(\dl)}\lradlO 0
  \end  {split}     \end  {equation}
  for some constant $C_2$ which is independent of $t$ and $\dl$.
  The last inequality is Lemma~\ref{l:bound_on_LBsquare}.

  Now we come to the local time process $\Lt_0(\cdot)$.
  Recall $r_0$, $R_0$ from Section~\ref{sec:main_result}
  and let $\tau_1$ be the first jump time of $\pt(\cdot)$
  from the boundary point $0$.
  The local time $\Lh_0(t)$ converges to zero almost surely
  as $t\to0$.
  By the theorem of dominated convergence, this implies 
  $\Expectation^0\Lh_0(t)\to0$  as $t\to0$.
  Thus there exists a $t_0\geq0$ such that
  $r_0\Expectation^0 \rob{\Lh_0^{(1)}(t)}\leq 1/4$ for
  all $t\leq t_0$.
  Then we obtain from the definition~\eqref{eq:def:Lt}
  of $\Lt_0(\cdot)$ and from the Markov property
  \begin{equation*}  \begin{split}
    \Expectation^0\eckbb{\roB{\Lt_0(t)}^2\1_{\tau_1<t}}
    &\leq
      2\Expectation^0\eckbb{\roB{\Lh_0^{(1)}(t)}^2}
      +2\Expectation^0
        \eckbb{
           \roB{\Lt_0(t)-\Lh_0^{(1)}(t)}^2
           \1_{\Lh_0^{(1)}(t)\geq R_0}
        }\\
    &\leq
      2\Expectation^0\eckbb{\roB{\Lh_0^{(1)}(t)}^2}
      +2\Expectation^{\Law{\pt(\tau_1)}}
        \eckbb{
           \roB{\Lt_0(t)}^2
        }
      \Expectation^0\eckbb{1-e^{-r_0\Lh_0^{(1)}(t)}}
      \\
    &\leq
      2\Expectation^0\eckbb{\roB{\Lh_0^{(1)}(t)}^2}
      +2\Expectation^{0}
        \eckbb{
           \roB{\Lt_0(t)}^2
        }
      \frac14
  \end{split}     \end{equation*}
  for all $t\leq t_0$.
  Using this estimate we get for $t\leq t_0$
  \begin{equation}  \begin{split}
     \Expectation^0\eckbb{\roB{\Lt_0(t)}^2}
     &\leq\Expectation^0\eckbb{\roB{\Lh_0^{(1)}(t)}^2}
         +\Expectation^0\eckbb{\roB{\Lt_0(t)}^2\1_{\tau_1<t}}\\
     &\leq3\Expectation^0\eckbb{\roB{\Lh_0^{(1)}(t)}^2}
         +\frac{1}{2}\Expectation^0\eckbb{\roB{\Lt_0(t)}^2}.
  \end{split}     \end{equation}
  Therefore
  \begin{equation}
    \limtO\frac{1}{t} \Expectation^0\eckbb{\roB{\Lt_0(t)}^2}
    \leq
    6\;\limtO\frac{1}{t} \Expectation^0\eckbb{\roB{\Lh_0^{(1)}(t)}^2}
    =0
  \end{equation}
  where the last equality is~\eqref{eq:LB_tau_LB_sigma}.
  \qed
\end  {proof}

%
%
\noindent
\section{The backward process}%
\label{sec:the_backward_process}

In Section~\ref{sec:the generator of the time-reversed process}, we identified the 
generator $\Gt$ of the time-reversed process (see Lemma \ref{l:generator_backward_process}.)  
However, while it is clear that the boundary behavior of this process must be prescribed 
by the domain of the generator, it is difficult to see how a qualitative description of 
the process can be deduced from the analytical condition that defines this domain.
To address this issue, we show in this section that the process $\pt(\cdot)$ defined in 
Section~\ref{sec:main_result} also has generator $\Gt$.  This confirms the heuristic 
arguments given in the introduction and shows that the time-reversed process is a 
jump-diffusion process whose jump times depend on the local time process constructed 
in the preceding section.

The proof of Theorem~\ref{thm:main_result} is based on the It\^o-Tanaka formula for 
semimartingales which involves the semimartingale local time process.  Because this
local time differs by a scalar factor from the local time process introduced in Section 6 
(see Eq.~\eqref{eq:Lt_Lb} below), we have restated the semimartingale It\^o-Tanaka formula
in terms of $\Lt_{p}(\cdot)$.  This is done in the following lemma.
%
%
\begin{lem}  \label{l:Ito_Tanaka_Y_tilde}
  Assume~\ref{assumptions}.
  Let $\rob{\Yt(t)\colon t\geq0}$ be a diffusion corresponding to the generator $\At$
  defined in (8).  Then for each $\psi\in\coreGent$
  \begin{equation}  \begin{split} \label{eq:Ito_Tanaka_Y_tilde}
    \psi(\Yt(t))-\psi(\Yt(0))
    &=\int_0^t D_\mt D_\St \psi(\Yt(u))\,du
      +\int_0^t \psi^{'}(\Yt_u)\sqrt{v(\Yt_u)}dB_u\\
    &\qquad  + \frac{1}{2} \Lt_{0}(t)\mal D_{\St}\psi(0)
             - \frac{1}{2} \Lt_{1}(t)\mal D_{\St}\psi(1)
  \end  {split}     \end  {equation}
  for all $t\geq0$ almost surely.
\end  {lem}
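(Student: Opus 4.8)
The plan is to reduce \eqref{eq:Ito_Tanaka_Y_tilde} to the classical It\^o--Tanaka formula for continuous semimartingales by passing to the natural scale of $\At$, where both the diffusion and the test function become tractable. Write $Z_t:=\St(\Yt(t))$ for the diffusive motion in natural scale. By the representation underlying Lemma~\ref{l:local_time_process}, $Z$ is a time-changed Brownian motion, reflected at the accessible end(s) of the scale interval $[\St(0),\St(1)]$ and never reaching an inaccessible end (where $\St(i)=\pm\infty$). First I would record the relation between the weighted local time $\Lt$ and the semimartingale local time $L^{\Yt}$ of $\Yt$: comparing the defining occupation formula \eqref{eq:occupation_time_formula_Lt} (integration against $\mt$) with the classical occupation-density formula for $L^{\Yt}$ (integration against $d\langle\Yt\rangle_u=v(\Yt(u))\,du$) yields $\Lt_p(t)=\St^{'}(p)\,L^{\Yt}_p(t)$, the scalar relation \eqref{eq:Lt_Lb} alluded to in the text. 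This is the identity through which the boundary terms will ultimately be expressed in terms of $\Lt$.

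Next I would set $\Phi:=\psi\circ\St^{-1}$, so that $\psi(\Yt(t))=\Phi(Z_t)$ and $\Phi^{'}=D_\St\psi\circ\St^{-1}$. Because $\psi\in\coreGent\subset\Dom(D_\St)$, the one-sided limits $D_\St\psi(0+)$ and $D_\St\psi(1-)$ exist and are finite, so $\Phi$ is $C^1$ up to the closed scale interval and $C^2$ in its interior, which is the regularity needed to run the It\^o--Tanaka formula for $\Phi(Z)$. Since $Z$ is a reflected, time-changed Brownian motion, its Skorokhod decomposition reads $dZ_t=dM_t+d\ell^0_t-d\ell^1_t$, where $M$ is the martingale part and $\ell^0,\ell^1$ are the nondecreasing boundary local times supported on $\{Z=\St(0)\}$ and $\{Z=\St(1)\}$ respectively. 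Applying It\^o's formula and using that $\ell^i$ charges only the boundary, the reflection contributes the finite-variation term $\Phi^{'}(\St(0)+)\,\ell^0_t-\Phi^{'}(\St(1)-)\,\ell^1_t=D_\St\psi(0)\,\ell^0_t-D_\St\psi(1)\,\ell^1_t$; the martingale part returns $\int_0^t\Phi^{'}(Z_u)\,dM_u=\int_0^t\psi^{'}(\Yt_u)\sqrt{v(\Yt_u)}\,dB_u$ after undoing the time change; and the occupation (quadratic-variation) part returns $\int_0^t D_\mt D_\St\psi(\Yt(u))\,du$, the relevant change-of-variables being fixed by the definitions of $\St,\mt$ in \eqref{eq:def:st_mt}. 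Rewriting $\ell^0,\ell^1$ through the scale relation of the previous paragraph converts them into $\tfrac12\Lt_0(t)$ and $\tfrac12\Lt_1(t)$, producing exactly \eqref{eq:Ito_Tanaka_Y_tilde}.

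For inaccessible boundaries the formula is immediate: if $2\abs{\mu(i)}\geq\abs{v^{'}(i)}$ then $\St(i)=\pm\infty$, $Z$ never visits that end, $\ell^i\equiv0$, and $\Lt_i\equiv0$ by Lemma~\ref{l:local_time_process}, so the corresponding term is absent on both sides. The main obstacle is the rigorous treatment of an \emph{accessible} boundary, where the drift $\mut(\cdot)$ is singular --- logarithmically divergent in the critical case $2\mut(i)=v^{'}(i)$ by Lemma~\ref{l:boundary_behavior_mue_tilde} --- so that $\psi^{'}$ itself blows up and the formula cannot be applied directly in the $p$-coordinate. The scale change is precisely what removes this singularity, turning $\psi$ into the $C^1$ function $\Phi$ and $\Yt$ into a reflected martingale; what then remains is to confirm that the only singular contribution is the reflection local time and that no extra atom arises. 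This is forced by the fact that $D_\mt D_\St\psi$ is bounded and continuous on $[0,1]$, so the occupation integral $\int_0^t D_\mt D_\St\psi(\Yt(u))\,du$ is well defined despite $\Phi^{''}$ blowing up near the scale boundary. I would make this precise by localizing on $[\eps,1-\eps]$, applying the smooth It\^o formula there, and letting $\eps\to0$, using the a.s.\ joint continuity of $(t,p)\mapsto\Lt_p(t)$ from Lemma~\ref{l:local_time_process} and the second-moment control of Lemma~\ref{l:bound_on_LBsquare} to pass to the limit in the boundary and occupation terms. The final bookkeeping is to pin down the factor $\tfrac12$ and the two signs, both of which are dictated by the Skorokhod decomposition of the reflected process and the scale identity $\Lt_p=\St^{'}L^{\Yt}_p$.
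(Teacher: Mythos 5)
Your route is genuinely different from the paper's. You pass to natural scale, identify $Z_t=\St(\Yt(t))$ as a (speed-measure time-changed) Brownian motion doubly reflected at the finite ends of the scale interval, read off the boundary terms from the Skorokhod decomposition $dZ=dM+d\ell^0-d\ell^1$, and translate $\ell^i$ back into $\Lt_i$ via the scale relation. The paper instead stays in the original coordinate: it approximates $\psi$ by functions $\psi_n$ equal to $\psi$ on $(\tfrac1n,1-\tfrac1n)$ and constant near the boundary, applies the semimartingale It\^o--Tanaka formula (a function that is $\C^2$ except for a single derivative jump at $\tfrac1n$ and $1-\tfrac1n$), so that the boundary terms arise as $\Lb_{1/n}(t)\,[\psi_n'(\tfrac1n+)-\psi_n'(\tfrac1n-)]=\tfrac12\Lt_{1/n}(t)D_\St\psi(\tfrac1n)$ via \eqref{eq:Lt_Lb}, and then lets $n\to\infty$ using the joint continuity of $\Lt_p(t)$. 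The paper's argument needs nothing beyond Theorem 3.7.1/Problem 3.6.24 of Karatzas--Shreve; yours buys a cleaner conceptual picture (the boundary term \emph{is} the reflection term) at the cost of having to actually establish that the excursion-excised, speed-changed Brownian motion is a semimartingale with the asserted Skorokhod decomposition and that $\ell^i$ equals half the semimartingale local time of $Z$ at $\St(i)$ --- a real step that you assert rather than prove, though it is classical (It\^o--McKean) and your $\tfrac12$ and signs do come out right once it is in place.

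There is, however, one concrete error in your case analysis. You claim that $2\abs{\mu(i)}\geq\abs{v^{'}(i)}$ forces $\St(i)=\pm\infty$, hence $\ell^i\equiv0$ and $\Lt_i\equiv0$. This conflates accessibility for the \emph{forward} diffusive motion with accessibility for the \emph{backward} diffusive motion $\At$. By Lemma~\ref{l:when_accessible} and \eqref{eq:St_asymptotic}, if $\lambda w_i\not\equiv0$ then $\St(i)$ is finite even when $2\abs{\mu(i)}\geq\abs{v^{'}(i)}$: the drift reversal $\mut(i)=v^{'}(i)-\mu(i)$ makes $i$ accessible to $\Yt$. In that case $\Yt$ does reach $i$ and accrues strictly positive local time there, so the term $\tfrac12\Lt_i(t)D_\St\psi(i)$ is genuinely present in \eqref{eq:Ito_Tanaka_Y_tilde}. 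The vanishing statement in Lemma~\ref{l:local_time_process} that you invoke concerns the jump-diffusion $\pt(\cdot)$, which leaves $i$ by an instantaneous jump; it does not apply to the pure diffusion $\Yt$ of the present lemma. The fix is straightforward --- drop this case split and treat every boundary with $\St(i)\in\R$ as a reflecting end of the scale interval, which your main argument already handles --- but as written the claim is false and would omit a nonzero boundary term in exactly the regime the paper cares most about.
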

\begin{proof}
  Fix $\psi\in\coreGent$.
  We approximate $\psi$ with suitable functions and apply the semimartingale 
  It\^o-Tanaka formula.  Denote by $\rob{\Lb_p(t)}_{t\geq0,p\in[0,1]}$ the
  semimartingale local time process of $\ro{\Yt(t)}_{t\geq0}$.  We 
  remark that, in general, this local time process is distinct from the
  local time, $\Lt_p(\cdot)$, introduced in the preceding section.
  By Theorem 3.7.1 in~\cite{KaSh} we may and we will assume
  that $\Lb_p(t)$ is continuous in $t$ and \cadlag\ in $p$.
  The occupation time formula (Theorem 3.7.1 in~\cite{KaSh}) states that
  \begin{equation} \label{eq:occupation_time_formula_Lb}
    \int_0^t \psi\rob{\Yt(u)}v\rob{\Yt(u)}\,du
    =2\int_0^1 \psi(p)\Lb_p(t)\,dp,\qquad t\geq0,
  \end  {equation}
  almost surely.  Let $f$ be a continuous function which is
  $\C^2$ except in $\{a_1,\ldots, a_n\}\subset[0,1]$
  and which admits finite limits $f^{'}(a_k+)$ and $f^{'}(a_k-)$, $k=1,\ldots,n$.
  Then the It\^o-Tanaka formula for continuous semimartingales
  (see Theorem 3.7.1 and Problem 3.6.24 in~\cite{KaSh}) states that
  \begin{equation}  \begin{split}   \label{eq:ito_tanaka_semimartingale}
    \lefteqn{f(\Yt_t)-f(\Yt_0)}\\
    &=\int_0^t f^{'}(\Yt_u)\mut(\Yt_u)\,du
      +\int_0^t f^{''}(\Yt_u)\tfrac{1}{2}v(\Yt_u)\,du
      +\int_0^t f^{'}(\Yt_u)\sqrt{v(\Yt_u)}dB_u\\
    &\quad+\sum_{k=1}^n \Lb_{a_k}(t)
        \eckB{ f^{'}\rob{a_k+} - f^{'}\rob{a_k-} }
  \end  {split}     \end  {equation}
  almost surely.

  For every $n\in\Nat$,
  let $\psi_n$ be a continuous function which is equal to $\psi$ in
  $(\tfrac1n,1-\tfrac{1}{n})$ and which is constant both in $[0,\tfrac{1}{n}]$
  and in $[1-\tfrac1n,1]$.
  In addition, suppose that $(\psi_n)_{n\in\Nat}$
  approximates $\psi$ uniformly in $[0,1]$ and that $(D_\mt D_\St \psi_n)_{n\in\Nat}$ 
  approximates $D_\mt D_\St \psi$ pointwise and boundedly in $(0,1)$.
  Note that
  \begin{equation}
    D_\St \psi_n\rob{\frac{1}{n}-}=0\ \text{ and }\ 
    D_\St \psi_n\rob{\frac{1}{n}+}=D_\St\psi\rob{\frac{1}{n}}.
  \end{equation}
  Comparing the occupation time
  formula~\eqref{eq:occupation_time_formula_Lb}
  of $\Lb_p(\cdot)$
  with the occupation time
  formula~\eqref{eq:occupation_time_formula_Lt}
  of $\Lt_p(\cdot)$
  we see that
  \begin{equation}  \label{eq:Lt_Lb}
    \Lb_p(t)=\Lt_p(t)\frac{1}{2} v(p)\mt(p)
    =\Lt_p(t)\frac{1}{2\St^{'}(p)}\quad\fa p\in(0,1).
  \end  {equation}
  Now applying the
  It\^o-Tanaka formula~\eqref{eq:ito_tanaka_semimartingale}
  to $\psi_n(\cdot)$ and
  inserting~\eqref{eq:Lt_Lb}, we arrive at
  \begin{equation}  \begin{split}   \label{eq:ito_tanaka_n}
    \psi_n(\Yt_t)-\psi_n(\Yt_0)
    &=\int_0^t D_\mt D_\St\psi_n(\Yt_u)\,du
      +\int_0^t \psi_n^{'}(\Yt_u)\sqrt{v(\Yt_u)}dB_u\\
    &\quad
       +\frac{1}{2} \Lt_{\frac{1}{n}}(t) D_\St \psi_n\rob{ \frac1n }
       -\frac{1}{2} \Lt_{1-\frac{1}{n}}(t) D_\St \psi_n\rob{ 1-\frac1n }.
  \end  {split}     \end  {equation}
  Note that the Lebesgue measure of $\curlb{u\leq t\colon \Yt_u\in\{0,1\}}$
  is equal to zero almost surely.
  Letting $n\to\infty$ in~\eqref{eq:ito_tanaka_n} completes the proof.
  \qed
\end  {proof}


\begin{proof}[Proof of Theorem~\ref{thm:main_result}]
  Recall $\Gt$, $D_\mt D_\St$ and $\coreGent$ from
  Section~\ref{sec:the generator of the time-reversed process}.
  Lemma~\ref{l:generator_backward_process} shows that the generator of
  the time-reversed process is the closure of $\Gt$.
  Therefore it remains to be shown that the generator of the Markov process
  $\rob{\pt(t)\colon t\geq0}$ restricted to the
  set $\coreGent$ coincides with $\Gt$, that is, that
  \begin{equation}  \label{eq:conv_to_Gent}
    \frac{\Expectation^p \psi\rob{\pt(t)}-\psi(p)}{t}
    \lratO \Gt \psi(p)=D_\mt D_\St \psi(p)
  \end  {equation}
  holds for all $p\in[0,1]$ and every $\psi\in\coreGent$.

  Recall $R_i,r_i,\kappa_i$ for $i\in\{0,1\}$ from
  Section~\ref{sec:main_result}.
  Fix $\psi\in\coreGent$ and note that $\psi$ is $\C^2$ in $(0,1)$.
  Using It\^o's formula,
  it is straightforward to show that the convergence in~\eqref{eq:conv_to_Gent}
  holds for every $p\in(0,1)$ if $\lambda \kappa_i\neq 0$
  and holds for every $p\in[0,1]$ if $\lambda\kappa_i=0$.
  It remains to prove~\eqref{eq:conv_to_Gent} for $i\in\{0,1\}$
  if $\lambda\kappa_i\neq0$.
  Starting at $i\in\{0,1\}$, $\rob{\pt(t)\colon t\geq0}$ evolves according to
  a diffusion $\rob{\Yt(t)\colon t\geq0}$ which is governed by $\At$
  until the first time $t$ such that
  $\Lt_i(t)\geq R_i$.
  At that time, the process restarts from an independent random
  point $J_i$ in $(0,1)$ with distribution
  $\tfrac{1}{\kappa_i}w_i(p)\pi(p)\,dp$.
  Thus
  \begin{equation}  \begin{split} \label{eq:main_calc}
    \lefteqn{\Expectation^i \psi(\pt(t))-\psi(i)
    -\Expectation^i\eckB{ \1_{\Lt_i(t)<R_i}\rob{\psi(\Yt_t)-\psi(i)}}}\\
    &=\Expectation^i\eckB{\int_0^{\Lt_i(t)}
         \Expectation^{J_i}\eckb{ \psi(\pt\ro{t-l})-\psi(i)}
         r_i e^{-r_i l}\,dl}\\
    &=\Expectation^i\eckB{\int_0^{\Lt_i(t)}
      \ruB{\int_0^1 \rob{\psi(z)-\psi(i)}
        \frac{1}{\kappa_i}  w_i(z)\pi(z)\,dz+O(t-l)}
        r_i e^{-r_i l}\,dl}\\
    &=\Expectation^i\eckb{1-e^{-r_i \Lt_i(t)}}
      \int_0^1 \eckb{ \psi(z)-\psi(i)}
      \frac{1}{\kappa_i} w_i(z)\pi(z)\,dz
        +O\rob{t\mal\Expectation^i\Lt_i(t)}\\
    &=o(t)+\Expectation^i\eckb{r_i \Lt_i(t)}
        (-1)^{i+1} \lim_{p\to i}\rob{\tfrac12 v\pi\psi^{'}}(p)\frac{1}{\lambda\kappa_i}
        +O\rob{t\mal\Expectation^i\Lt_i(t)}\\
  \end  {split}     \end  {equation}
  as $t\to0$.
  In the last step we used the inequality $1-e^{-x}-x\leq x^2$ for $x\geq0$
  together with Lemma~\ref{l:Lt_is_o_of_sqrt_t} and $\psi\in\coreGent$.
  The local time $\Lt_i(t)$ converges to zero a.s.\ as $t\to0$.
  By the dominated convergence theorem, this implies 
  that $\Expectation^i\Lt_i(t)$ converges to zero as $t\to0$.
  Thus the last
  summand on the right-hand side of~\eqref{eq:main_calc} is of order $o(t)$.
  Furthermore Lemma~\ref{l:rate} implies
  \begin{equation}   \label{eq:first_calc}
     r_i \lim_{p\to i}\rob{v\pi\psi^{'}}(p)\frac{1}{\lambda\kappa_i}
     =\lim_{p\to i}\rob{v\mt\psi^{'}}(p)=D_\St \psi(i).
  \end{equation}
  Next we consider the second expectation on the left-hand side
  of~\eqref{eq:main_calc}.
  Using H\"older's inequality we see that
  \begin{equation}  \begin{split}
    \Expectation^i&\eckB{ \1_{\Lt_i(t)\geq R_i}\rob{\psi(\Yt(t))-\psi(i)}}
    =\Expectation^i\eckB{ \rob{1-e^{-r_i \Lt_i(t)}}
                           \rob{\psi(\Yt(t))-\psi(i)}
                         }\\
    &\leq\sqrt{\Expectation^i\eckB{ \rob{\Lt_i(t)}^2} }
         \sqrt{\Expectation^i\eckB{ \rob{\psi(\Yt(t))-\psi(i)}^2} }
    =\sqrt{o(t)}\sqrt{O(t)}=o(t)
  \end{split}     \end{equation}
  where we have applied Lemma~\ref{l:Lt_is_o_of_sqrt_t}.
  Thus  we obtain from
  the It\^o-Tanaka formula~\eqref{eq:Ito_Tanaka_Y_tilde}
  \begin{equation}  \begin{split}  \label{eq:second_calc}
    &\Expectation^i\eckB{ \1_{\Lt_i(t)< R_i}\rob{\psi(\Yt(t))-\psi(i)}}
    =o(t)+ \Expectation^i{ \eckb{\psi(\Yt(t))-\psi(i)} }\\
    &= o(t)+t D_\mt D_\St \psi(i) 
       +(-1)^i \frac{1}{2}\Expectation^i\eckb{\Lt_{i}(t)}\mal D_{\St}\psi(i)
  \end{split}     \end{equation}
  as $t\to0$.
  Putting~\eqref{eq:main_calc}, \eqref{eq:first_calc}
  and~\eqref{eq:second_calc} together
  completes the proof of Theorem~\ref{thm:main_result}.
  \qed
\end  {proof}

\acks
We are grateful to Tom Kurtz, Alison Etheridge and two anonymous referees for 
their suggestions and comments on the manuscript.

\hyphenation{Sprin-ger}

\end{document}